\documentclass{template_arxiv}

\usepackage{amsmath,amsfonts,amssymb,amsthm}
\usepackage{graphicx}
\usepackage[dvipsnames]{xcolor}
\usepackage{geometry}
\usepackage{bm} 

\usepackage{enumitem}
\setlist{ 
    itemsep=0.2\baselineskip, 
    topsep=0.2\baselineskip, 
    parsep=0pt, 
    partopsep=0pt 
}

\usepackage[ 
    colorlinks=true, 
    allcolors=blue!60!black 
]{hyperref}

\usepackage{microtype} 

\usepackage{xspace}

\newtheorem{lemma}{Lemma}
\newtheorem{theorem}[lemma]{Theorem}
\newtheorem{proposition}[lemma]{Proposition}
\newtheorem{corollary}[lemma]{Corollary}
\newtheorem{conjecture}[lemma]{Conjecture}
\newtheorem{question}{Question}

\theoremstyle{definition}

\newtheorem{remark}[lemma]{Remark}
\newtheorem{example}[lemma]{Example}

\newcommand{\N}{\mathbb{N}}
\newcommand{\Z}{\mathbb{Z}}
\newcommand{\Q}{\mathbb{Q}}
\newcommand{\R}{\mathbb{R}}
\newcommand{\Acal}{\mathcal{A}}
\newcommand{\Fcal}{\mathcal{F}}

\newcommand{\ab}{\bm{\mathrm{ab}}}
\newcommand{\ac}{\rho}
\newcommand{\card}{\mathrm{card}}
\newcommand{\GL}{\mathrm{GL}}
\newcommand{\Hbal}{$(\mathcal{H}_{\mathrm{bal}})$\xspace}
\newcommand{\Hcomp}{$(\mathcal{H}
_{\mathrm{ab}})$\xspace}
\newcommand{\Hfreq}{$(\mathcal{H}_{\mathrm{RI}})$\xspace}
\newcommand{\lan}{\mathcal{L}}
\newcommand{\pref}{\mathrm{pref}}
\newcommand{\Span}{\mathrm{Span}}

\title{A proof of Rauzy’s conjecture on abelian complexity}

\newcommand{\shorttitle}{A proof of Rauzy’s conjecture on abelian complexity}

\date{June 14, 2026}

\author[,1,2]{M\'elodie Andrieu \thanks{melodie.andrieu@univ-littoral.fr}}
\author[,1,3]{L\'eo Vivion \thanks{lvivion.math@gmail.com}}

\affil[1]{Laboratoire de Math\'ematiques Pures et Appliqu\'ees Joseph Liouville, Universit\'e du Littoral C\^ote d'Opale, France.}

\affil[2]{Centro de Modelamiento Matem\'atico, Universidad de Chile and IRL-CNRS 2807, Chile.}

\affil[3]{Department of Mathematics, University of Li\`ege, Li\`ege, Belgium}

\newcommand{\shortauthor}{M. Andrieu, L. Vivion}

\begin{document}

\maketitle

\begin{abstract}
We resolve a conjecture posed by Rauzy in 1983 concerning the $d$-ary generalizations of Sturmian words. A classical theorem by Coven and Hedlund from 1973 states that Sturmian words, which are classically defined by their subword complexity (a combinatorial refinement of  topological entropy), or as the natural codings of irrational rotations on the circle, are also characterized by their abelian complexity (a combinatorial refinement of the notion of discrepancy).
More precisely, Sturmian words are exactly the binary infinite words with rationally independent letter frequencies and minimal abelian complexity, which is in this case constant and equal to $2$. We prove that there exist no infinite ternary words with rationally independent letter frequencies and constant abelian complexity equal to $3$, thereby establishing Rauzy's conjecture.

\vspace*{.5cm}
\noindent{\bf Keywords:}   Combinatorics on words  $\bm{\cdot}$ Rauzy's conjecture $\bm{\cdot}$ Abelian complexity $\bm{\cdot}$ Sturmian words $\bm{\cdot}$ Balancedness
\end{abstract}

\section{Introduction}

This article is concerned with a conjecture by Rauzy which emerged in the context of generalizing Sturmian words (which are binary words) to alphabets of arbitrary size. Sturmian words are a central object in combinatorics on words and symbolic dynamics. They were introduced in 1940 by Morse and Hedlund \cite{MH40}, and have been  studied continuously ever since (see the lecture notes \cite{Loth,Pytheas02,And26}, and, for a recent result, the article \cite{Ficial16}).
A striking feature of Sturmian words is that they admit various equivalent and insightful descriptions from perspectives as diverse as combinatorics, dynamical systems, arithmetic, and geometry. From the combinatorial perspective (which is the focus of this paper), they are classically defined through their subword complexity.

By definition, the \emph{subword complexity} of an infinite word $w$ (where an \emph{infinite word} is a sequence indexed by $\mathbb{N}$ taking values in a finite set called an \emph{alphabet}) is the function that counts, for every positive integer $n$, the number of distinct subwords of length $n$ that appear in $w$. For instance, the subword complexity of the periodic word $w = 1212121212\ldots$ is the constant function $n \mapsto 2$ (indeed, for each $n$, one finds exactly two subwords of length $n$: one that starts with the letter $1$, and one that starts with the letter $2$). In their seminal work from 1938, Morse and Hedlund proved that some kind of converse holds.

\begin{theorem}[Morse and Hedlund, 1938 \cite{MH38}]\label{th:MH38}
An infinite word $w$ is eventually periodic if and only if its subword complexity  is eventually constant, if and only if there exists $n \in \N_{>0}$ such that $w$ admits at most $n$ subwords of length $n$.
\end{theorem}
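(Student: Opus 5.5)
We prove the cycle of implications: eventually periodic $\Rightarrow$ subword complexity eventually constant $\Rightarrow$ some $n$ with at most $n$ subwords of length $n$ $\Rightarrow$ eventually periodic. Throughout, write $p_w(n)$ for the number of distinct subwords of length $n$ of $w$.

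\emph{First implication.} Suppose $w = u v v v \cdots$ with $|u| = k$ and $|v| = q \geq 1$. Every subword of length $n$ starts at some position $i$. The subword starting at position $i \geq k$ depends only on $i$ modulo $q$, so these positions contribute at most $q$ words. The positions $i < k$ contribute at most $k$ words. Hence $p_w(n) \le k+q$ for all $n$. Moreover $p_w$ is non-decreasing, since each subword of length $n$ extends to the right to at least one subword of length $n+1$, and distinct words have distinct extensions. A bounded non-decreasing integer-valued function is eventually constant.

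\emph{Second implication.} If $p_w(n) = C$ for all $n \geq N$, then choosing $n \geq \max(N, C)$ gives $p_w(n) = C \le n$.

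\emph{Third implication.} This is the substantive step. Since $p_w(1) \geq 1$ and $p_w$ is non-decreasing, the hypothesis $p_w(n) \le n$ together with the pigeonhole principle yields some $m \in \{1, \dots, n-1\}$ with $p_w(m+1) = p_w(m)$. Otherwise $p_w$ would increase by at least $1$ at each of the $n-1$ steps, forcing $p_w(n) \geq p_w(1) + n - 1 \ge n$. Equality would then force $p_w(1) = 1$, in which case $w$ is constant and we are done.

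From $p_w(m+1) = p_w(m)$ we deduce that each subword of length $m$ has exactly one right extension. Indeed, every length-$m$ subword has at least one right extension, and the map sending a word of length $m+1$ to its length-$m$ prefix is a surjection between sets of equal size, hence a bijection. So the letter following any occurrence of a length-$m$ factor is determined by that factor.

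Now consider the sequence of windows $w_i w_{i+1} \cdots w_{i+m-1}$ for $i \in \N$. It takes values in a finite set of size $p_w(m)$. By the unique-extension property, each window determines the next one. By pigeonhole, two windows at positions $i < j$ coincide, and then by induction all subsequent windows coincide with period $j - i$. Therefore $w$ is eventually periodic with preperiod $i$ and period $j-i$.

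The main obstacle is the third implication, specifically the case analysis in the pigeonhole step. One must treat the degenerate edge case $p_w(1) = 1$ (a constant word) separately. One must also check carefully that the prefix map on length-$(m+1)$ subwords is surjective, which is exactly why infiniteness of $w$ matters: every occurrence of a length-$m$ factor is followed by some letter.
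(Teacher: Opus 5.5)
Your proof is correct and complete. It is the classical argument: $p_w$ is non-decreasing; the bound $p_w(n)\le n$ forces a plateau $p_w(m+1)=p_w(m)$; a plateau means every length-$m$ factor has a unique right extension; so the sliding window is driven by a self-map of a finite set and must eventually cycle. You also handle the degenerate case $p_w(1)=1$ correctly.

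The paper gives no proof of this statement. It is quoted from \cite{MH38} as background, so there is no in-paper argument to compare with. The paper only uses the weaker consequence recorded in Proposition~\ref{prop:MH38bis}, namely rational dependence of the frequencies. That follows from your conclusion, since eventually periodic words have rational letter frequencies.

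Your plateau mechanism is specific to the bound $p_w(n)\le n$. Under the hypothesis $p_w(n)\le (d-1)n$ of Theorem~\ref{th:tijdeman}, no plateau need occur. For example, $w_2=2\cdot w_{Fibo}$ has $p_{w_2}(n)=n+2$ and satisfies $p_{w_2}(2)\le 4$, yet it is not eventually periodic. This is why that generalization only concludes rational dependence and is proved by other (algebraic) means.
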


The subword complexity measures how complicated an infinite word is, that is, how well the knowledge of small portions of the infinite word provides knowledge of larger portions. The subword complexity can be understood as a combinatorial refinement of the dynamical notion of topological entropy \cite[Chapter 4]{LindMarcus}.
A consequence of the Morse--Hedlund Theorem~\ref{th:MH38} is that the  subword complexity of non-eventually periodic words is at least $n \mapsto n+1$. In their subsequent paper \cite{MH40}, Morse and Hedlund proved that  infinite words with subword complexity exactly $n \mapsto n+1$ exist; these words are called \emph{Sturmian words}.  It follows from their definition that Sturmian words are written over a two-letter alphabet (these letters are indeed the two subwords of length $1$). The following word, called the \emph{Fibonacci word}, is a classical example of a Sturmian word:
\begin{equation}\label{eq:Fibo}w_{Fibo} = 0100101001001010010100100101001001\ldots\end{equation}
Combinatorially, the Fibonacci word can be constructed as the limit of the iterates of the substitution $\sigma: 0 \mapsto 01, 1 \mapsto 0$. (A \emph{substitution} is a morphism on the free monoid.) Dynamically, it is obtained as the symbolic coding of the trajectory of the point $\alpha = 2- \varphi$ (where $\varphi$ denotes the golden ratio) under iteration of the rotation of angle $\alpha$ on the unit circle $\R/\Z$,
with respect to the partition $[0,1)= [0,1-\alpha)\cup[1-\alpha,1)$. Note that every Sturmian word can also be constructed as the limit of the iterates of suitably chosen substitutions (\cite{MH40}; see also \cite{GLR09} for an alternative presentation), as well as the symbolic trajectory of an irrational rotation of the circle \cite{MH40}. (This second description of Sturmian words will arise in the final step of the proof of Rauzy's conjecture.) 

In this article, we are interested in another well-known combinatorial characterization of Sturmian words: as the non-eventually periodic words with minimal abelian complexity.

The notion of abelian complexity traces back to the works of Coven and Hedlund from 1973. By definition, two finite words are \emph{abelian equivalent} if they are anagrams (for example, \texttt{astronomer} $\sim_{ab}$ \texttt{moonstarer}). The \emph{abelian complexity} of an infinite word is the function that counts, for each $n \in \mathbb{N}_{>0}$, the number of pairwise non-abelian equivalent subwords of length $n$ that appear in $w$:
\[\begin{array}{rccl}
\rho_w \colon &\N_{>0} &\longrightarrow &\N \\
&n &\longmapsto &\card\; \lan_n(w)/_{\sim_{ab}}.
\end{array}\]
(In this expression, $\lan_n(w)$ denotes the set of subwords of length $n$ appearing in $w$.) For instance, the abelian complexity of the periodic word $w = 121212121212\ldots$ is the function 
\[\rho_w(n)=\begin{cases} 1 \; \text{ if $n$ is even}, \\ 2 \; \text{ otherwise.} \end{cases} \]
The abelian complexity is a refinement of the combinatorial notion of balancedness as well as its dynamical counterpart, the discrepancy, which measure how fast the distribution of letters in growing prefixes of $w$ converges to its limit value \cite{Ada03}.  Beyond the question of computing abelian complexities, numerous ``abelian problems''---some of them inherited from the work of Erd\H{o}s---have been considered in combinatorics on words (see, for example, the recent survey \cite{FP23}).

In 1973, Coven and Hedlund proved that the minimality of the abelian complexity characterizes Sturmian words.

\begin{theorem}
[Coven and Hedlund, 1973, \cite{CH73}]\label{th:CH73}
Sturmian words are exactly the non-eventually periodic words with constant abelian complexity  equal to $2$. This abelian complexity is furthermore minimal in the following sense: if there exists a positive integer $n$ for which all length-$n$ subwords of an infinite word $w$ are abelian equivalent, then $w$ is eventually periodic (in fact, it is even purely periodic).
\end{theorem}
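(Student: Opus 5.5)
We prove the two assertions of Theorem~\ref{th:CH73} separately, starting with the minimality statement, since it feeds into the characterization.

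\emph{Minimality.} Suppose all length-$n$ subwords of $w$ are abelian equivalent. Then for every $i$ the factors $w_i\cdots w_{i+n-1}$ and $w_{i+1}\cdots w_{i+n}$ have the same Parikh vector. Removing the common part $w_{i+1}\cdots w_{i+n-1}$ gives $w_i = w_{i+n}$ for all $i \geq 0$, so $w$ is purely periodic of period $n$. Consequently a non-eventually periodic word has $\rho_w(n)\geq 2$ for every $n$. It also has at least two letters, hence at least two distinct factors of each length. This is the lower bound.

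\emph{Binary reduction.} Next we show that constant abelian complexity $2$ forces a binary alphabet. Indeed, $\rho_w(1)$ is the number of letters occurring in $w$, so $\rho_w(1)=2$ means $w$ is binary. On a binary alphabet $\{0,1\}$, the abelian class of a length-$n$ factor is determined by its number $|u|_1$ of $1$'s. Sliding a window of length $n$ one step changes $|u|_1$ by at most $1$, so by a discrete intermediate value argument the set $\{|u|_1 : u\in\lan_n(w)\}$ is an integer interval. Hence $\rho_w(n)=2$ for all $n$ is exactly the statement that this interval has length one. In other words, $w$ is \emph{balanced}: $\bigl||u|_1-|v|_1\bigr|\le 1$ for all factors $u,v$ of equal length. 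So the theorem reduces to the classical equivalence
\[
\text{Sturmian} \iff \text{balanced and not eventually periodic}.
\]

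\emph{Balanced and aperiodic implies Sturmian.} This is the main obstacle, and we follow Morse--Hedlund. Suppose $p(n)\ge n+2$ for some $n$, the least such. By Theorem~\ref{th:MH38} and aperiodicity, $p(n)=n+1$ on smaller lengths, so there are two distinct right special factors of length $n-1$. Taking their longest common suffix $v$, we obtain factors $0v0$ and $1v1$. These are two factors of equal length whose numbers of $1$'s differ by $2$, contradicting balancedness. Hence $p(n)\le n+1$ for all $n$, and with aperiodicity (Theorem~\ref{th:MH38}) $p(n)=n+1$. The hard part is making the extraction of $0v0$ and $1v1$ rigorous. We argue via left or right special factors: if two distinct words of length $n-1$ both extend to the right by both letters, we compare them from the right to find the first disagreement. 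If the extraction is delicate, the fallback is to show that balanced words have well-defined frequency $\alpha$ and satisfy $|w_0\cdots w_{k-1}|_1 - k\alpha$ bounded in $(-1,1]$. This makes $w$ a mechanical (rotation) word, from which the $n+1$ count follows by counting intervals of the circle.

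\emph{Sturmian implies balanced.} Suppose $w$ is Sturmian but not balanced. A standard lemma gives a shortest palindrome-type witness $v$ with $0v0$ and $1v1$ both factors. Then $v0$ and $v1$ are both factors, so $v$ is right special, and likewise $0v$ and $1v$ show $v$ is left special. Minimality of the unbalanced pair, together with a second unbalanced pair that also arises, produces two distinct right special factors of some length. This forces $p(m+1)-p(m)\ge2$ for that length $m$, contradicting $p(n)=n+1$. Finally, Sturmian words are not eventually periodic by Theorem~\ref{th:MH38}, which completes the equivalence.
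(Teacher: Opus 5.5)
The paper does not prove this statement; it quotes it from \cite{CH73}. Your proposal therefore has to stand on its own.

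Most of it does. The minimality argument is complete: cancelling the common middle of two consecutive windows gives $w_i=w_{i+n}$. The reduction of ``aperiodic with $\rho_w\equiv 2$'' to ``binary, balanced, aperiodic'' is correct, and so is the implication balanced and aperiodic $\Rightarrow p(n)=n+1$. The extraction you flag as delicate is in fact immediate. If $u\neq u'$ are right special of equal length and $z$ is their longest common suffix, then $u=paz$ and $u'=p'bz$ with $a\neq b$, so $az0,az1,bz0,bz1$ are all factors. The fallback through mechanical words is not needed.

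The genuine gap is the converse, $p(n)=n+1\Rightarrow$ balanced. This is the substantive half of the theorem, and exactly what ``Sturmian $\Rightarrow\rho_w\equiv 2$'' requires. Your sentence about ``a second unbalanced pair that also arises'' names no such pair and no mechanism. Moreover, no argument confined to the neighbourhood of the witness can work. If $w$ is shortest with $0w0,1w1$ factors, the extension set $\{0w0,1w0,1w1\}$ is fully compatible with $w$ being the unique right special factor of length $|w|$ and $1w$ the unique one of length $|w|+1$. What must be shown is that the fourth extension also occurs, so that $0w$ and $1w$ are both right special. This needs global information. For instance:
\begin{itemize}
\item First, $x$ is recurrent. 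If a suffix $y$ had a smaller language, then, $y$ being aperiodic, $p_x(n)\geq p_y(n)+1\geq n+2$ for large $n$. Hence every length has a unique right and a unique left special factor.
\item Let $v$ be the longest bispecial factor shorter than $w$; it exists since $\varepsilon$ is bispecial. By minimality of $w$, $v$ does not have both $0v0$ and $1v1$ as extensions.
\item From the first occurrence of $v$ on, write $x$ as a concatenation of the two first-return paths in the Rauzy graph of order $|v|$: $A$ leaves $v$ by $v0$, $B$ leaves by $v1$.
\item One checks that $A$ must end with $0v$ and $B$ with $1v$; otherwise the transitions $AB$ and $BA$ would give both $1v1$ and $0v0$. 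Exactly one of $AA,BB$ occurs, say $AA$.
\item The next bispecial factor after $v$ is then the label of $A$, hence it is $w$. Its extensions correspond to the blocks $XAY$: $AAA\mapsto 0w0$, $AAB\mapsto 0w1$, $BAA\mapsto 1w0$, $BAB\mapsto 1w1$.
\item Since $0w0$ occurs, $AAA$ occurs infinitely often. It therefore lies inside a run of $A$'s bordered by $B$'s on both sides, which produces $AAB$ and $BAA$.
\end{itemize}
So all four extensions of $w$ occur, contradicting $p(|w|+2)-p(|w|+1)=1$.

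Either supply an argument of this kind, or cite the implication from \cite{CH73} or \cite[Chapter 2]{Loth}. As written, this step is asserted rather than proved, while effort went into the direction that was already easy.
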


\begin{remark}
Although the notions of subword and abelian complexities may seem, at first, minor variations of one another, they \textbf{are not} closely related (in particular, the Coven--Hedlund Theorem \ref{th:CH73} is not easy to prove). For instance, it is possible to find, already on the ternary alphabet:
\begin{itemize}
    \item infinite words whose abelian complexity is bounded above by $3$, but whose subword complexity grows exponentially (such a construction  follows from \cite[Theorem 4.3]{RSZ11}; see Example~\ref{ex:RSZ}).
    \item infinite words with linear subword complexity and an unbounded abelian complexity (see, for example, \cite{CFZ00,And21}).
\end{itemize}

It is thus remarkable that the minimality of both the subword and abelian complexities \emph{independently} characterizes Sturmian words.
\end{remark}

\medskip
In a series of papers and seminars in the 1980s, Rauzy initiated the study of different generalizations of Sturmian words over the ternary alphabet. This led, notably, to the introduction of the \emph{Tribonacci word}  and the \emph{Rauzy fractal} \cite{rau82}, and, later on, of the whole class of \emph{Arnoux--Rauzy words} (\cite{Rau8283,AR91}), which can be understood as a combinatorial and continued-fraction-wise generalization of Sturmian words. It also led to the introduction of \emph{cubic billiard words} \cite{Rau8283,AMST94} and \emph{natural codings of interval exchanges} \cite{Rau7677,Rau79}, which generalize Sturmian words from a dynamical--geometrical perspective. These famous papers contain numerous conjectures and questions, some of which have been completely answered while others remain unsolved. (For example, let us mention that the computation of the subword complexity of cubic billiard words  was established over the period 1994--2009 by the successive works of Arnoux--Mauduit--Shiokawa--Tamura, Baryshnikov, and Bédaride \cite{AMST94, Bar95, Bed09}, while the characterization of the language of interval exchange transformations was given by Ferenczi and Zamboni in 2008 \cite{FZ08}.)

In this article, we are interested in the following conjecture.

\begin{conjecture}[Rauzy, 1983, \cite{Rau8283}, Section 6.2] \label{conj:Rauzy}
Except for very particular vectors of letter frequencies, there exist no infinite ternary words with constant abelian complexity equal to $3$.
\end{conjecture}
Let us recall that the frequency of the letter $a$ in an infinite word $w$ is the limit, if it exists, of the proportion of occurrences of the letter $a$ in longer and longer prefixes of $w$:
\begin{equation}\label{eq:def_frequencies}
f_w(a) = \lim_{n \to \infty}\, \frac{\vert \pref_n(w)\vert_a}{n}.
\end{equation}
(In the expression above, $\vert \pref_n(w)\vert_a$ denotes the number of occurrences of the letter $a$  among the first $n$ letters of $w$.)
It is well known and not difficult to verify that this limit exists for every infinite word whose abelian complexity is bounded.

Unfortunately, Rauzy did not specify what a ``very particular'' vector is. Clearly, Conjecture \ref{conj:Rauzy} admits trivial counterexamples. For example, the  following two ternary words have an abelian complexity constant equal to $3$:
\begin{equation}\label{eq:counterexamples}\begin{array}{lll} w_1& =&21000000000000000000000000000000000000000000 \ldots,\\
w_2& =& 2\cdot w_{Fibo} = 20100101001001010010100100101001001 \ldots 
\end{array}\end{equation}
(Indeed, for every $n \geq 1$, $w_1$ admits exactly three subwords of length $n$, and they are pairwise non-abelian equivalent. For $w_2$, it follows from Theorem \ref{th:CH73} that the Fibonacci word \eqref{eq:Fibo} contains exactly two abelian classes of subwords of length $n$, which are written using only the letters $0$ and $1$, and to which we must thus add the abelian class of the length-$n$ prefix of $w_2$, which contains the letter $2$.) Note that $w_2$ is not eventually periodic.

We believe that Rauzy already had these counterexamples in mind, perhaps together with others, but could not determine the proper domain of validity of his conjecture. And indeed, in 2011, two additional remarkable families of counterexamples were constructed by Richomme, Saari and Zamboni \cite{RSZ11}. Contrary to $w_1$ and $w_2$, these counterexamples are uniformly recurrent (an infinite word is said \emph{recurrent} if each of its subwords appears infinitely often, it is \emph{uniformly recurrent} if they furthermore appear with bounded gaps); in particular, they contain infinitely many occurrences of every letter.

\medskip
In this article, we formalize Rauzy's unspecified condition as: ``with rationally independent letter frequencies'' (by rational independence, we mean that the frequencies of the letters span a linear space of maximal dimension over the field of rational numbers). It is already satisfactory that the set of such vectors of letter frequencies has Lebesgue measure 1 in the set of all vectors of letter frequencies. But most importantly, this choice is natural when it comes to generalizing Sturmian words. The reasons are explained in Section 2.

Our main result confirms Rauzy's conjecture.

\begin{theorem}\label{th:Rauzy_conj}
There does not exist any ternary word  with rationally independent letter frequencies and constant abelian complexity  equal to $3$.
\end{theorem}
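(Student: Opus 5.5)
We argue by contradiction: let $w$ be a ternary word over $\{0,1,2\}$ with $\rho_w(n)=3$ for all $n$ and rationally independent frequencies $f=(f_0,f_1,f_2)$. The plan has three stages: extract rigid geometric structure from the abelian classes, reduce the length-$n$ Parikh vectors to a Sturmian-type description, and then use rational independence to force a fourth abelian class for some $n$.

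\textbf{Stage 1: the Parikh vectors form a lattice triangle.} For each $n$, let $P_n\subset\Z^3$ be the set of Parikh vectors of length-$n$ subwords, so $\card P_n=3$. Sliding a window by one letter changes the Parikh vector by $e_a-e_b$, and for $m+n$ the Parikh vector of a factor of length $m+n$ splits as an element of $P_m$ plus an element of $P_n$. Bounded abelian complexity gives balancedness, hence frequencies exist and every $v\in P_n$ satisfies $\Vert v-nf\Vert_\infty\le C$. Since the frequencies are rationally independent, $nf$ avoids all rational affine lines in the plane $x_0+x_1+x_2=n$. From this I would show that for all large $n$ the three points of $P_n$ are not collinear. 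Indeed, a segment containing points that differ by $e_a-e_b$ would confine $nf$ near a fixed line modulo the lattice, which is impossible by Kronecker density of $nf$ mod $\Z^3$. Consequently $P_n=\{v_n,\,v_n+u,\,v_n+u'\}$ is a unimodular triangle in the lattice $\{x_0+x_1+x_2=0\}$. The next task is to understand how this triangle moves from $n$ to $n+1$: it must be either a translate of the same triangle or its point reflection, the analogue of the Sturmian two-point picture. I would establish this by examining how the three points of $P_{n+1}$ arise from $P_n+e_a$.

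\textbf{Stage 2: local rigidity and a reduction to a projected Sturmian word.} I expect to prove that, for large $n$, the triangle has a fixed shape up to orientation. If so, some projection collapsing one edge direction, for instance identifying two letters, turns $w$ into a binary word with abelian complexity $2$. By the Coven--Hedlund theorem (Theorem~\ref{th:CH73}), this binary word is Sturmian, hence a coding of a rotation by a frequency such as $f_0+f_1$. Next I would analyse how occurrences of the two identified letters are distributed within the blocks of the Sturmian word. Triangle rigidity forces that distribution to be itself balanced with abelian complexity $2$, essentially a second Sturmian word indexed by the return times.

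\textbf{Stage 3: compatibility and the main obstacle.} I would then show that the two rotations cannot coexist with constant complexity $3$. The triangle $P_n$ is determined by the position of $nf$ relative to the lattice: its orientation flips according to which unimodular triangle in a fixed triangulation contains $nf$ mod $\Z^2$. With only three classes permitted, the fractional parts of $nf$ must always fall in configurations compatible with the combinatorial constraints from Stage 1, which confines $\{nf\}$ to a set of measure less than one in the torus. That contradicts Weyl equidistribution of $(nf)$ mod $\Z^2$, which holds precisely because $1,f_0,f_1$ are rationally independent. The main obstacle is Stage 2: proving that the triangle never degenerates or changes shape, and handling the words of Richomme--Saari--Zamboni type that nearly work. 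I would attack it with a careful case analysis on the edge vectors $e_a-e_b$ together with the additive relations $P_{m+n}\subseteq P_m+P_n$.
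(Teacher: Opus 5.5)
\textbf{There is a genuine gap: everything after the first half of Stage 1 is stated as an expectation, and the step you flag as the main obstacle is the actual content of the theorem.}

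\emph{What is sound.} The first half of Stage 1 is correct. Non-collinearity in fact holds for every $n$, not only for large $n$: three collinear points in the plane $x_0+x_1+x_2=n$ are linearly dependent, whereas rational independence forces the matrix $\bm{M}_n(w)$ of the three Parikh vectors to be invertible.

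\emph{Stage 1, second half, and Stage 2.} Nothing you write controls the shape of $P_n$. The local constraints between consecutive lengths do not force ``translate or point reflection''. For example, take $P_n=v+\{0,\,e_0-e_1,\,e_0-e_2\}$, an elementary triangle with all letters balanced at length $n$. Take $P_{n+1}=v+\{e_0,\,2e_0-e_1,\,e_0+e_1-e_2\}$, an obtuse triangle with imbalance $2$ on letter $1$. This pair satisfies all extension and restriction compatibilities. As long as the obtuse shape $\{v,\,v+e_a-e_b,\,v+2e_a-e_b-e_c\}$ may occur at some lengths, no projection of $w$ has abelian complexity $2$ at every length, so Coven--Hedlund cannot be invoked.

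\emph{Stage 3.} It has the same problem. The claim that $P_n$ is determined by the position of $nf$ is only justified once you know $P_n$ is an elementary triangle; then it is the tile containing $nf\in\mathrm{conv}(P_n)$. The ``set of measure less than one'' is never identified. Moreover, the two-level Sturmian structure you aim for in Stage 2 is not contradictory by itself. As the paper recalls, Sturmian-colored Sturmian words have rationally independent frequencies and abelian complexity at most $4$. So the value $3$ must enter in a way your plan does not specify.

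\textbf{The missing idea: renormalize instead of proving rigidity for $w$ itself.}
\begin{enumerate}
\item If $w$ is not $1$-balanced, pick $\ell$ where the obtuse configuration $\{(r,s,t),(r+1,s-1,t),(r+2,s-1,t-1)\}$ occurs. Pass to the block-coded word $I_\ell(w)$ over the three abelian classes of length-$\ell$ factors.
\item Invertibility of $\bm{M}_\ell(w)$, which comes from rational independence, transfers both constant abelian complexity $3$ and rational independence to $I_\ell(w)$.
\item Since $w$ is $2$-balanced, an imbalance $2$ in $I_\ell(w)$ on a letter other than $(r+1,s-1,t)$ would lift to an imbalance $3$ in $w$. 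Hence $I_\ell(w)$ has two $1$-balanced letters $a,b$.
\item Your Stage 3 then becomes concrete. Projecting $a$, respectively $b$, against the rest gives Sturmian words coding rotations by $f_a$ and $f_b$. Since $1,f_a,f_b$ are rationally independent, Kronecker's theorem sends the joint orbit on $\R^2/\Z^2$ into the rectangle $[1-f_a,1)\times[1-f_b,1)$. This forces $a$ and $b$ to occur at the same position, a contradiction.
\end{enumerate}
That rectangle is the forbidden set your equidistribution argument was missing. It only becomes available after the induction step has produced two $1$-balanced letters.
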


The proof of Theorem~\ref{th:Rauzy_conj} is mostly algebraic. It relies on the introduction of a new renormalization process on infinite words, called \emph{abelian induction},  together with  Kronecker's diophantine theorem. 
Like the Rauzy--Veech induction for interval exchange transformations, this renormalization process admits a cocycle structure closely related to the representations of real numbers. The authors are  working on a second paper developing the theory of abelian induction. 

\medskip Theorem~\ref{th:Rauzy_conj}  is also true, and much easier to prove, when $d\geq 4$. Indeed, Currie and Rampersad   showed that having a constant abelian complexity is extremely constraining when 
$d\geq4$ \cite{CR11}. Building on their work, we  establish the following theorem.

\begin{theorem}\label{th:Rauzy_conj-dary}
Let $d\geq 4$. There does not exist any $d$-ary word  with rationally independent letter frequencies and constant abelian complexity  equal to $d$.
\end{theorem}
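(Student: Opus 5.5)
The plan is to show that a word with constant abelian complexity $d\geq 4$ is so rigid that some letter frequency is forced to be $0$, or the frequencies satisfy a nontrivial rational relation; either contradicts rational independence. The main input is the work of Currie and Rampersad \cite{CR11}. For $d\geq 4$ their analysis of words with constant abelian complexity shows that, after discarding a finite prefix, such a word is eventually periodic or is built from a Sturmian (balanced binary) word by a letter-to-letter morphism or by a bounded periodic decoration. In either case the letters effectively take at most two ``independent'' values.

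First we reduce to the recurrent case. If some letter occurs only finitely often, its frequency is $0$, and rational independence fails, since $\{f(a)\}$ would have to span a $\Q$-space of dimension $d$ with one coordinate zero. So every letter occurs infinitely often. Frequencies exist because the abelian complexity is bounded. Next we pass to a suffix, or to a limit word in the orbit closure, which has the same frequencies and abelian complexity at most $d$. This lets us invoke \cite{CR11}.

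The core step is to translate the Currie--Rampersad structure into linear relations among frequencies. If the suffix is eventually periodic, all frequencies are rational. Then $\sum_a f(a)=1$ shows that $\{f(a)\}$ spans a $\Q$-space of dimension $1<d$. If instead the suffix is obtained from a Sturmian word of slope $\alpha$ via a morphism with bounded, periodic combinatorics, each $f(a)$ lies in $\Q+\Q\alpha$. Hence $\dim_\Q \Span_\Q\{f(a)\}\leq 2<d$.

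Some care is needed with the convention for ``rationally independent''. If it means that the frequencies together with $1$ span a space of dimension $d$ (since $\sum f(a)=1$), the bound $\leq 2$ still contradicts $d\geq4$, with room to spare. This margin is why $d\geq 4$ is easy while $d=3$ is delicate: a dimension count of $2$ fails to contradict the ternary case.

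The main obstacle is making the Currie--Rampersad classification precise enough to extract the bound $\dim\leq 2$. Their results are phrased in terms of balancedness and the shape of the abelian classes. What I would try first is a direct argument. Constant abelian complexity $d$ with $d\geq4$ forces, for each $n$, the Parikh vectors of length-$n$ factors to form a very constrained set: essentially a segment in a single direction $e_a-e_b$, plus isolated exceptional vectors occurring only in a prefix. Consequently, in a recurrent suffix all Parikh vectors of length-$n$ factors differ by multiples of one fixed vector $e_a-e_b$. Taking limits, $f(c)$ is rational for every $c\neq a,b$, so there are at least $d-2\geq 2$ rational frequencies, which contradicts independence. Verifying this ``single direction'' claim from \cite{CR11} is the step I expect to require the most work.
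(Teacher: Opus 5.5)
There is a genuine gap: your central step rests on a result that \cite{CR11} does not contain. Currie and Rampersad prove no structure theorem of the form ``eventually periodic, or a Sturmian word decorated by a letter-to-letter morphism or a bounded periodic decoration.'' What they prove is a non-existence statement: for $d\geq 4$ there is no \emph{recurrent} $d$-ary word with constant abelian complexity $d$. So the inclusion $f(a)\in\Q+\Q\alpha$ and the bound $\dim\leq 2$ have no support.

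Your fallback ``single direction'' claim is false as stated. For $n=1$ the Parikh vectors of factors of a recurrent $d$-ary word are $e_1,\ldots,e_d$, and these do not pairwise differ by multiples of one fixed $e_a-e_b$ once $d\geq 3$. Nothing of that kind is established for larger $n$ either. The reduction step is also left undone. A suffix need not be recurrent, and for a limit word you only obtain $\ac\leq d$, whereas the hypothesis of \cite{CR11} is constant complexity $d$.

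Once \cite{CR11} is used for what it actually says, the repair is short, and it gives a route different from the paper's. By \eqref{eq:balanceabcomplexity}, $w$ is $C$-balanced with $C\leq d-1$. Hence every factor $v$ of length $n$ satisfies $\big\vert\,|v|_a-nf_w(a)\big\vert\leq C$. Choose $u$ in a minimal subshift of the orbit closure of $w$. Then:
\begin{itemize}
\item $u$ is uniformly recurrent, and its factors are factors of $w$;
\item $f_u=f_w$, which is rationally independent;
\item $\ac_u(n)\leq \ac_w(n)=d$ by inclusion of languages;
\item $\ac_u(n)\geq d$ by Proposition~\ref{prop:belowbound_abcomp}.
\end{itemize}
So $u$ is a recurrent $d$-ary word with constant abelian complexity $d$, contradicting \cite{CR11}.

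The paper instead re-reads the proof of \cite{CR11} to show that $w$ itself is not even abelian recurrent on its length-$2$ factors. Then some letter of the $d$-ary induced word $I_2(w)$ occurs only finitely often, and Lemma~\ref{lemma:induction_basics}, Assertion 3, gives $\dim\leq d-1$. The repaired route uses \cite{CR11} as a black box, at the price of the orbit-closure and uniform-frequency step plus Proposition~\ref{prop:belowbound_abcomp}. The paper's route avoids the orbit closure but requires checking every use of recurrence in \cite{CR11}.
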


Using abelian induction, we also easily prove the following proposition.

\begin{proposition}\label{prop:belowbound_abcomp} Let $d \geq 1$ and $w$ be an infinite $d$-ary word with rationally independent letter frequencies. Then for all $n \in \N_{>0}$, we have 
$\ac_w(n)\geq d.$
\end{proposition}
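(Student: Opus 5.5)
The strategy is to show that the Parikh vectors of length-$n$ subwords must affinely span a hyperplane of dimension $d-1$; affine independence then forces at least $d$ of them to be distinct.

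Fix $n\in\N_{>0}$ and let $P_n=\{\ab(u) : u\in\lan_n(w)\}\subset \Z^d$ be the finite set of Parikh vectors of length-$n$ subwords, so that $\ac_w(n)=\card P_n$. Every element of $P_n$ lies in the affine hyperplane $H_n=\{x\in\R^d : x_1+\dots+x_d=n\}$. The goal is to show that $P_n$ is not contained in any proper affine subspace of $H_n$. Since $H_n$ has dimension $d-1$, spanning it affinely requires at least $d$ points, which gives $\ac_w(n)\geq d$. (The case $d=1$ is trivial, because every word has at least one subword of each length.)

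Suppose, for contradiction, that $P_n$ lies in a proper affine subspace of $H_n$. Then there is a nonzero integer vector $c\in\Z^d$, not proportional to $(1,\dots,1)$, and an integer $k$ such that $\langle c,\ab(u)\rangle=k$ for all $u\in\lan_n(w)$. Integrality follows because $P_n$ consists of lattice points, so the affine relations among them can be taken rational and then cleared of denominators.

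Now cut a prefix $\pref_{mn}(w)$ into $m$ consecutive blocks of length $n$. Each block is a subword of $w$, so
\[
\langle c,\ab(\pref_{mn}(w))\rangle = mk .
\]
Dividing by $mn$ and letting $m\to\infty$ gives $\langle c,f\rangle = k/n$, where $f$ is the frequency vector. This requires the frequencies to exist along the subsequence $mn$, which is implicit in the hypothesis that they are rationally independent, since that hypothesis presupposes the frequencies exist. Because $\sum_a f(a)=1$, we can rewrite this as
\[
\Big\langle c-\tfrac{k}{n}(1,\dots,1),\, f\Big\rangle=0 ,
\]
a rational linear relation among $f(1),\dots,f(d)$. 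Its coefficient vector is nonzero, because $c$ is not proportional to $(1,\dots,1)$. This contradicts rational independence.

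The only delicate point is the convention for rational independence. Since the frequencies always satisfy $\sum f(a)=1$, it must mean that $f(1),\dots,f(d)$ are linearly independent over $\Q$ ($\dim_\Q \Span = d$). That is exactly the form of the relation derived above: a nontrivial rational combination of the $f(a)$ equal to zero, obtained after absorbing the constant $k/n$ using $\sum f(a)=1$. I expect no real obstacle here. The paper's phrase ``abelian induction'' presumably packages the block decomposition into a renormalized word over the alphabet $P_n$, and the argument above is the direct version of that renormalization.
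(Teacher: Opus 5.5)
Your proof is correct and is essentially the paper's argument: both cut $\pref_{mn}(w)$ into consecutive length-$n$ blocks, and both use that the abelian vectors of these blocks are integer vectors taking at most $\ac_w(n)$ values. The paper packages this as $\bm{f_w}=\frac{1}{n}\bm{M}_n(w)\cdot\bm{f'}$ (abelian induction) and bounds $\dim\Span_\Q$ by the number of columns of the integer matrix $\bm{M}_n(w)$; you argue dually with a rational linear form that is constant on those abelian vectors, which spares you from passing to subsequential limits $\bm{f'}$ of the induced word's letter proportions.
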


From Theorems~\ref{th:Rauzy_conj}, \ref{th:Rauzy_conj-dary}, Proposition~\ref{prop:belowbound_abcomp}, and using again abelian induction, we can
show that in every infinite $d$-ary word with rationally independent letter frequencies, there exists no linear sequence of lengths for which the abelian complexity is at most $d$. In particular, the abelian complexity of such a word cannot even be \emph{eventually constant}, equal to $d$.

\begin{corollary}\label{cor:extraction_lineaire}
Let $d \geq 3$. Let $w$ be an infinite $d$-ary word that admits letter frequencies. If there exists $\ell\in\N_{>0}$ such that for every $n\in\N_{>0}$, $\ac_{w}(\ell n)\leq d$, then the letter frequencies of $w$ are rationally dependent.
\end{corollary}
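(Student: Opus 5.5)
The idea is to reduce Corollary~\ref{cor:extraction_lineaire} to Theorems~\ref{th:Rauzy_conj}, \ref{th:Rauzy_conj-dary} and Proposition~\ref{prop:belowbound_abcomp} by recoding $w$ over a larger alphabet of blocks of length $\ell$. Assume, for a contradiction, that $w$ has rationally independent letter frequencies $(f(a))_{a\in\Acal}$ and that $\ac_w(\ell n)\le d$ for all $n\ge1$.

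\textbf{Step 1: block coding.} For each residue $r\in\{0,\dots,\ell-1\}$, cut the shifted word $S^r w$ into consecutive blocks of length $\ell$. Since $\ac_w(\ell)\le d$, these blocks fall into at most $d$ abelian classes. Let $v_r$ be the word over the alphabet of abelian classes of $\lan_\ell(w)$ that records the abelian class of each block. Two words of length $n$ in $v_r$ that have the same Parikh vector concatenate to abelian-equivalent words of length $\ell n$ in $w$. Hence $\ac_{v_r}(n)$ is at least the number of abelian classes of such concatenations; but this inequality goes in the wrong direction for our purpose. The useful structure is instead the linear map $\pi:\Z^{\mathrm{classes}}\to\Z^{\Acal}$ sending a class to its Parikh vector. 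This map is injective when the Parikh vectors of the classes are linearly independent, and then the abelian class of $u\in\lan_n(v_r)$ is determined by the abelian class of its image in $\lan_{\ell n}(w)$. This gives $\ac_{v_r}(n)\le \ac_w(\ell n)\le d$.

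\textbf{Step 2: frequencies.} The letter frequencies of $v_r$ exist, and they satisfy $\pi(\text{freq}(v_r))=\ell\cdot f$. Rational independence of $f$ in dimension $d$ forces the classes actually used (infinitely often) to have at least $d$ linearly independent Parikh vectors. So there are exactly $d$ such classes, $\pi$ restricted to them is an invertible rational matrix, and $v_r$ (after discarding a finite prefix, or handling finitely many extra classes) is a $d$-ary word whose frequency vector is $\ell\,\pi^{-1}f$. This vector is again rationally independent, because $\pi^{-1}$ is in $\GL_d(\Q)$. The sum of its entries equals $1$: this is automatic, since $\sum_a f(a)=1$ and every block has length $\ell$. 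Dimension counting then works exactly as in the hypothesis.

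\textbf{Step 3: conclude.} Proposition~\ref{prop:belowbound_abcomp} gives $\ac_{v_r}(n)\ge d$, so $\ac_{v_r}$ is constant equal to $d$. Theorem~\ref{th:Rauzy_conj} (for $d=3$) or Theorem~\ref{th:Rauzy_conj-dary} (for $d\ge4$) then yields a contradiction.

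\textbf{Main obstacle.} The delicate point is the interplay between Steps~1 and~2. We must ensure that $v_r$ uses exactly $d$ block classes, with no rogue class appearing finitely often, since such a class would raise the alphabet size. We also need the hypothesis to give $\ac_{v_r}(n)\le d$ rather than only a bound on $w$. I would handle finitely occurring classes by choosing $r$ and a suffix appropriately, noting that abelian complexity of a suffix is at most that of the word. For a suffix starting at position $\ell m+r$, the bound $\ac\le d$ at lengths $\ell n$ is inherited, and the suffix avoids every block class that occurs only finitely often. The paper's abelian induction presumably packages exactly this recoding.
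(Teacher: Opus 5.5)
Your proof is correct and is essentially the paper's argument: with $r=0$ your $v_0$ is exactly the abelian induced word $I_\ell(w)$. The paper likewise shows $\bm{M}_\ell(w)\in\GL_d(\Q)$ from the rational independence of the frequencies, derives existence and rational independence of the induced frequencies and $\ac_{I_\ell(w)}(n)\le \ac_w(\ell n)\le d$, and then invokes Proposition~\ref{prop:belowbound_abcomp} and Theorem~\ref{th:Rauzy_conj} (or Theorem~\ref{th:Rauzy_conj-dary}); the shifts $r$ and suffix-trimming are unnecessary, since the alphabet of $v_0$ has at most $d$ letters and your rank argument forces all $d$ of them to occur with positive frequency, and the existence of the frequencies of $v_r$ should be deduced after invertibility, from uniqueness of subsequential limits, rather than asserted at the start of Step~2.
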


Interestingly, Corollary~\ref{cor:extraction_lineaire} is optimal in the following sense: for every $d\geq 3$,
there exist infinite $d$-ary words with rationally independent letter frequencies  whose abelian complexity takes the value $d$ infinitely often; the $d$-bonacci word is one such example \cite{RSZ10,Tur13,Tur15}.

\begin{corollary}\label{coro:not-eventually-constant}Let $d \geq 3$. There exists no infinite $d$-ary word with rationally independent letter frequencies and eventually constant abelian complexity equal to $d$.
\end{corollary}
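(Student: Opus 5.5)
Corollary~\ref{coro:not-eventually-constant} is a direct consequence of Corollary~\ref{cor:extraction_lineaire}; the plan is to check its hypotheses and read off the contradiction. Suppose, for a contradiction, that $w$ is an infinite $d$-ary word, $d\geq 3$, with rationally independent letter frequencies and with $\ac_w(n)=d$ for all $n\geq N$, for some $N\in\N_{>0}$.

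First, $w$ admits letter frequencies. Since $\ac_w$ is eventually constant, and $\ac_w(n)$ is finite for every $n$, the function $\ac_w$ is bounded. As recalled after Conjecture~\ref{conj:Rauzy}, every infinite word with bounded abelian complexity admits letter frequencies. (Alternatively, the frequencies already exist implicitly, since rational independence of the frequencies is assumed.)

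Next, I choose $\ell=N$. For every $n\in\N_{>0}$ we have $\ell n\geq N$, hence $\ac_w(\ell n)=d\leq d$. This is exactly the hypothesis of Corollary~\ref{cor:extraction_lineaire}.

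That corollary then says the letter frequencies of $w$ are rationally dependent, contradicting our assumption. Hence no such $w$ exists.

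There is no real obstacle here: all the difficulty is contained in Corollary~\ref{cor:extraction_lineaire}. The only point needing care is that the existence of frequencies required by that corollary is covered, either by bounded abelian complexity or directly by the assumption.
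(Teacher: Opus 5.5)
Your proof is correct and follows the paper's own argument. Taking $\ell = N$ turns eventual constancy into the hypothesis $\ac_w(\ell n)\leq d$ for all $n\in\N_{>0}$ of Corollary~\ref{cor:extraction_lineaire}, and the letter frequencies exist because the abelian complexity is bounded, exactly as in the paper.
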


Note that Corollary~\ref{coro:not-eventually-constant} does not contradict the main theorem of Saarela \cite{Saa09}, which states that for all $d\geq 2$, there exists an infinite word whose abelian complexity is eventually constant, equal to $d$. Indeed, the words constructed by Saarela are binary.

\medskip
Finally, let us say a few words about the natural question behind the conjecture of Rauzy: what is the appropriate generalization of the Coven--Hedlund Theorem \ref{th:CH73}, that is, which class of ternary words with rationally independent letter frequencies enjoys a \emph{minimal}, or \emph{small}, or maybe even \emph{constant} abelian complexity?

At the time of writing, we are aware of two classes of infinite ternary words with rationally independent letter frequencies whose abelian complexity is less than or equal to $4$: these are the class of cubic billiard words generated by momenta with rationally independent coordinates \cite{AV23}, and the class of ``Sturmian-colored Sturmian words'' \cite{DMP24}. At the same time, we do not know any infinite ternary word with rationally independent letter frequencies whose abelian complexity is bounded above by $4$, and takes the value $3$ infinitely many times. (Note that the Tribonacci word mentioned above is not a counterexample since its abelian complexity also takes  each of the values $5,6$ and $7$ infinitely often \cite{RSZ10,Tur15}. The class of ternary words constructed by Kabor\'e and Tapsoba in \cite{KT07}, whose abelian complexity oscillates between $2$ and $4$, is not a counterexample either: these words have rationally dependent letter frequencies.)

\medskip
We conclude the introduction with two open questions.

\begin{question}
What is the appropriate generalization of Rauzy's conjecture when the alphabet size is $d \geq 4$? The question can be formalized as follows.
For $d \geq 1$, let $\kappa(d)$ denote the smallest integer for which there exists an infinite $d$-ary word with rationally independent letter frequencies and abelian complexity bounded above by $\kappa(d)$.

It is easy to check that $\kappa(1)=1$, $\kappa(2)=2$ (by the Coven--Hedlund Theorem~\ref{th:CH73}), and $\kappa(3)=4$ (by Theorem~\ref{th:Rauzy_conj} and the discussion above). Furthermore, it follows from Theorem~\ref{th:Rauzy_conj-dary} that, for every $d \geq 4$,
\[
\kappa(d) \geq d+1.
\]
This bound is probably of poor quality: to establish it, we barely use the rational independence of the letter frequencies.
In fact, keeping in mind that, in the cases $d=1,2,3$, the optimal bound is attained by $d$-ary hypercubic billiard words, whose abelian complexity is eventually constant and equal to $2^{d-1}$ \cite{AV23}, one may ask whether
\[
\kappa(d)=2^{d-1}
\]
for every $d \in \mathbb{N}_{>0}$.
\end{question}

\begin{question} For $d \geq 3$, can we dynamically or geometrically understand, or better, characterize the set of $d$-ary words with rationally independent letter frequencies and ``minimal''  abelian complexity? The question is already open for $d=3$.
\end{question}

\paragraph{Outline of the paper.}  In Section~\ref{sect:good_condition}, we explain why requiring the letter frequencies to be rationally independent is a natural condition when generalizing Sturmian words to larger alphabets. In Section~\ref{sect:proof_Rauzy_conj}, we prove our main result: Theorem~\ref{th:Rauzy_conj} (Rauzy's conjecture). In Section~\ref{sect:Other_proofs}, we prove Theorem~\ref{th:Rauzy_conj-dary}, and Corollaries~\ref{cor:extraction_lineaire} and \ref{coro:not-eventually-constant}. Proposition~\ref{prop:belowbound_abcomp} is proven in Section~\ref{sss:preliminaries_ab_induction}: it emerges as a consequence of the basic properties of abelian induction.

\bigskip

\emph{Some results of this paper have been announced, but not proved, in the conference long abstract \cite{AV23}.}


\section{Why the rational independence of the letter frequencies is a natural condition }\label{sect:good_condition}

In this section, we explain why the rational independence of the letter frequencies is a natural condition to require when one seeks to generalize Sturmian words.

\medskip

First, both Sturmian words and their standard combinatorial and dynamical generalizations have rationally independent letter frequencies. 
This is notably the case for Arnoux-Rauzy words (which include the Tribonacci word) \cite{And21,DHS22} and more generally $d$-ary strict episturmian words \cite[Chapter 4]{And21thesis}, Cassaigne-Selmer words \cite{CLL22}, but also the words encoding minimal trajectories in a hypercubic billiard table (see, for example, \cite{Bar95}). 

In fact, one may say that the rational independence of their letter frequencies is a core property of Sturmian words. Indeed, this property stems from their intimate connection with continued fractions (see, for instance, \cite[Chapter 6]{Pytheas02} or \cite[Chapter 1]{And26} in which this connection is carefully explained) and  manifests itself in all their classical dynamical and geometrical characterizations, for instance as the symbolic codings  of  \emph{irrational} rotations of the circle, \emph{minimal} square billiard trajectories, \emph{minimal} linear flows on the
two-dimensional torus, and as the digitization of lines in the plane with an \emph{irrational slope}.

\medskip
Secondly, both the converse implication of the Morse--Hedlund Theorem~\ref{th:MH38} (which is its interesting part)   and the Coven--Hedlund Theorem~\ref{th:CH73} admit equivalent formulations in terms of rational dependence of letter frequencies.

\begin{proposition} [Reformulation of the Morse--Hedlund Theorem~\ref{th:MH38}] \label{prop:MH38bis}  Let $w$ be an infinite binary word. If there exists $n \in \N$ such that $w$ admits at most $n$ subwords of length $n$, then the letter frequencies of $w$ exist and are rationally dependent. 
\end{proposition}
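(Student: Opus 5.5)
Assume $w$ is a binary word over $\{a,b\}$ with at most $n$ subwords of length $n$ for some $n$. The plan has two steps: first use the Morse--Hedlund Theorem~\ref{th:MH38} to get eventual periodicity, then compute the frequencies of an eventually periodic word directly.

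By Theorem~\ref{th:MH38}, $w$ is eventually periodic. So there exist a finite word $u$ and a nonempty finite word $v$ with $w = u v v v \cdots$.

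Next we show the frequencies exist and are rational. Write $p = |v|$. For $N \ge |u|$, write $N - |u| = kp + r$ with $0 \le r < p$. For each letter $c$,
\[
\vert \pref_N(w)\vert_c = \vert u\vert_c + k\,\vert v\vert_c + \varepsilon_N, \qquad 0 \le \varepsilon_N \le p .
\]
Dividing by $N$ and letting $N \to \infty$, we have $k/N \to 1/p$. The bounded terms vanish in the limit, so the limit in \eqref{eq:def_frequencies} exists and equals
\[
f_w(c) = \frac{\vert v\vert_c}{p} \in \Q .
\]

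Finally, rational dependence follows. The two frequencies $f_w(a)$ and $f_w(b)$ are rational and satisfy $f_w(a) + f_w(b) = 1$. Consider the $\Q$-span of $(f_w(a), f_w(b))$, or the span of the frequencies together with $1$, depending on the convention. Its dimension over $\Q$ is $1$, not the maximal value $2$. Equivalently, $p\, f_w(a) - \vert v\vert_a \cdot 1 = 0$ is a nontrivial rational relation. Hence the frequencies are rationally dependent.

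All the combinatorial content is carried by Theorem~\ref{th:MH38}, which we take as given. The only point needing care is matching the paper's notion of "rationally independent": we must verify that, for a binary word, rational frequencies give a span of dimension $1$ rather than $2$. The counting estimate above is routine.
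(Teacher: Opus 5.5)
Your proof is correct and follows the same route as the paper: Theorem~\ref{th:MH38} gives eventual periodicity, and an eventually periodic word has rational letter frequencies, whose $\Q$-span therefore has dimension $1<2$. Your prefix-counting estimate spells out the one fact the paper leaves as a one-line remark.
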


\begin{proposition} [Reformulation of the Coven--Hedlund Theorem~\ref{th:CH73}] \label{prop:CH73bis} An infinite word is Sturmian if
and only if its abelian complexity is constant, equal to 2, and if its letter
frequencies (which exist) are rationally independent.
\end{proposition}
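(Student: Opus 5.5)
The plan is to derive this reformulation from Theorem~\ref{th:CH73} and Theorem~\ref{th:MH38}, together with the classical link between Sturmian words and irrational rotations recalled in the introduction.

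For the direct implication, let $w$ be Sturmian. By Theorem~\ref{th:CH73}, its abelian complexity is constant equal to $2$, so it is bounded, and hence the letter frequencies exist. It then suffices to show that the frequency $\alpha=f_w(1)$ is irrational, since $f_w(0)=1-\alpha$ and rational independence of $(\alpha,1-\alpha)$ is equivalent to $\alpha\notin\Q$. I plan to use the description of $w$ as a coding of a rotation of angle $\alpha$ (\cite{MH40}), in which the frequency of a letter equals the length of the corresponding interval. That interval has length $\alpha$, which is irrational because the rotation is irrational. An alternative, more self-contained route is to argue by contradiction. Suppose $\alpha=p/q$. Then balancedness ($\ac_w\equiv 2$) forces every factor of length $q$ to contain $p$ or $p\pm1$ ones. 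The standard argument then shows that $w$ is eventually periodic, contradicting Theorem~\ref{th:MH38}, because the subword complexity $n+1$ is unbounded. I will cite the rotation description rather than redo this argument.

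For the converse, assume $\ac_w\equiv2$ and that the frequencies are rationally independent. By Theorem~\ref{th:CH73}, it suffices to show that $w$ is not eventually periodic. This is immediate: if $w=uv^\omega$, then each frequency equals $|v|_a/|v|\in\Q$. Consequently $f_w(0)$ and $f_w(1)$ are rational numbers summing to $1$, so they satisfy a nontrivial rational relation. This contradicts rational independence, where independence is understood as the frequency vector spanning the maximal dimension; over two letters this means $\alpha\notin\Q$.

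The only delicate point is the direct implication, namely the irrationality of the frequency of a Sturmian word. It is handled by invoking the rotation coding. Everything else is bookkeeping.
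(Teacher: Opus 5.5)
Your proof is correct and follows the paper's route, which derives the statement from Theorem~\ref{th:CH73} using only that an eventually periodic word has rational letter frequencies; this is exactly your converse direction. In the direct direction you also make explicit, via the irrational-rotation coding of \cite{MH40}, that a Sturmian word has irrational letter frequencies. The paper's one-line derivation does not literally cover this point, though it relies on the same rotation description in Section~\ref{ss:step3}.
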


(To derive Propositions~\ref{prop:MH38bis} and \ref{prop:CH73bis} from Theorems~\ref{th:MH38} and \ref{th:CH73}, we simply use the fact that an eventually periodic word  has rational letter frequencies.)
Furthermore, it was recently rediscovered by the first author and Cassaigne   that with this new formulation, the Morse--Hedlund theorem can be generalized to larger alphabets.

\begin{theorem} [Tijdeman, 1991, \cite{Tij99}, see also \cite{And26}(Chapter 3) for an algebraic proof] \label{th:tijdeman}  Let $d\geq 1$. Let $w$ be an infinite $d$-ary word that admits letter frequencies. If there exists $n \in \N$ such that $w$ has at most $(d-1)n$ subwords of length $n$, then the letter frequencies of $w$  are rationally dependent. 
\end{theorem}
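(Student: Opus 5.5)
We prove Tijdeman's theorem by induction on $d$, reducing the $d$-ary word to a word on fewer letters while keeping control of the subword complexity. The case $d=1$ is vacuous: a unary word has frequency vector $(1)$, which spans a one-dimensional $\Q$-space, and the hypothesis ``at most $0\cdot n$ subwords'' can never hold. The case $d=2$ is Proposition~\ref{prop:MH38bis}. From now on $d\geq 3$ and $w$ has at most $(d-1)n$ subwords of length $n$ for some fixed $n$. We may assume every letter occurs infinitely often. Otherwise we pass to a suffix of $w$, which has the same frequencies, a smaller alphabet, and no more subwords; the conclusion then follows from the inductive hypothesis, since the frequency of a missing letter is $0$ and this is already a rational dependence.

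\textbf{Step 1 (Rauzy graph).} Consider the Rauzy graph $G_{n}$ of order $n$: its vertices are the length-$(n-1)$ subwords occurring infinitely often in $w$, and its edges are the length-$n$ subwords occurring infinitely often. The frequency vector of $w$ is a limit of normalized letter counts along longer and longer paths in $G_n$. Decomposing long paths into simple cycles plus bounded remainders shows that the frequency vector lies in the convex cone generated by the abelianizations $\ab(c)\in\Z^d$ of the simple cycles $c$ of $G_n$.

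\textbf{Step 2 (counting).} The number of edges is at most $(d-1)n$. We want to show that the cycle space of this graph, or rather the image of the relevant cycles under the linear map $\ab$, has rank at most $d-1$. Since the frequency vector lies in the $\R$-span of integer vectors, the conclusion would follow: a vector in the real span of at most $d-1$ integer vectors in $\Z^d$ satisfies a nontrivial rational linear relation. The first Betti number of $G_n$ is $E-V+1$, which is small when $V$ is close to $E$, but $V$ can be much smaller than $n$, so this count alone is not enough.

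\textbf{Step 3 (a finer cycle count).} We will therefore argue as in the algebraic proof suggested by \cite{And26}. Vertices are words of length $n-1$ and edges come from words of length $n$, so every cycle of $G_n$ has length at least $1$. The key observation is that for each length $k\le n$, the vectors $\ab(u)$ with $u$ a subword of length $k$ lie on the affine hyperplane $\{x_1+\dots+x_d=k\}$. Consider the finite differences $\ab(u_{i+1}\ldots u_{i+n})-\ab(u_i\ldots u_{i+n-1})=e_{a}-e_{b}$ between consecutive length-$n$ windows. We plan to show that the lattice spanned by cycle abelianizations is generated by $\ab$ of the length-$n$ words together with a lattice of rank at most $d-1$. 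The frequency vector $f$ satisfies $\sum f(a)=1$, and it is a limit of averages of $\ab(\text{window})/n$. If the differences $e_a-e_b$ realized by transitions span only a proper subspace of the hyperplane $\{x_1+\dots+x_d=0\}$, then $f$ lies in a translate of that subspace by a rational point, which gives a rational relation.

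\textbf{Step 4 (the main obstacle).} The main obstacle is showing that $(d-1)n$ subwords force such a rank deficiency. We would try a pigeonhole on right-special factors. The complexity $p(k)$ is nondecreasing when all letters recur, and $p(1)=d$. If $p(n)\le (d-1)n$, then some increment $p(k+1)-p(k)$ with $k<n$ is at most $d-2$. At that level $k$, the Rauzy graph $G_{k+1}$ then has total branching $\sum(\deg^+-1)\le d-2$. Hence its cycle space has rank at most $d-1$: the number of independent cycles in a strongly connected graph equals $1$ plus its total out-branching. Applying Step 1 at level $k+1$ then puts $f$ in the real span of at most $d-1$ integer vectors, which yields a rational dependence. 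We expect that making the cycle-rank/branching identity rigorous, including the case where the graph is not strongly connected (handled by restricting to a terminal strongly connected component visited by the tail of $w$), is where the care lies.
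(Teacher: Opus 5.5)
\paragraph{Verdict.} The paper does not prove this statement; it quotes it from Tijdeman and from the cited lecture notes, so there is no proof here to compare against. Your argument is correct: read as Step~1 plus Step~4, it is a complete proof, namely the standard Rauzy-graph/cycle-space argument. Steps~2--3 and the induction on $d$ can be dropped.
\begin{itemize}
\item A letter with finitely many occurrences has frequency $0$, which is already a rational dependence.
\item The case $d=2$ is covered by the same argument.
\item Step~3's planned lattice statement and the ``translate by a rational point'' claim are neither proved nor needed.
\end{itemize}
Your remark in Step~2, that level $n$ alone does not suffice, is exactly why the pigeonhole over levels $k<n$ is the key move.

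\paragraph{The core argument.} There is a level $k<n$ at which the Rauzy graph $G_{k+1}$ of infinitely-occurring factors satisfies $E-V\le d-2$. Its cyclomatic number is then $E-V+1\le d-1$. Let $x_m\in\Z_{\geq 0}^E$ count the edges along the first $m$ steps of the tail walk. Its boundary has norm at most $2$, so any limit point $x$ of $x_m/m$ lies in the real cycle space $Z_\R$. Moreover $f=\lambda(x)$, where $\lambda$ is the integer last-letter labeling map. Hence $f$ lies in $\lambda(Z_\R)$, a rational subspace of dimension at most $d-1$. Any nonzero rational vector orthogonal to that subspace gives the dependence. This limit argument can replace your cycle decomposition in Step~1, and like it, it needs only letter frequencies, not factor frequencies.

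\paragraph{Details to state explicitly.}
\begin{itemize}
\item Your pigeonhole is run on the full complexity $p$, while the graph uses only infinitely-occurring factors. You can run it directly on $p^\infty$, which is nondecreasing, satisfies $p^\infty(1)=d$, and satisfies $p^\infty\le p$. Alternatively, note that finitely-occurring factors inject into finitely-occurring factors one letter longer, via any right extension. This gives $p^\infty(k+1)-p^\infty(k)\le p(k+1)-p(k)$.
\item Strong connectivity is automatic, so no terminal component is needed. Past the last occurrence of the finitely-occurring length-$(k+1)$ factors, the walk traverses every edge infinitely often.
\item To pass from simple cycles to ``at most $d-1$ integer vectors'', use that $\ab(c)=\lambda(\chi_c)$ is linear in the cycle indicator $\chi_c\in Z$. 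The group $Z$ is free of rank $E-V+1$.
\end{itemize}

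\paragraph{Relation to the paper's tools.} The linear-algebra principle is the same as in Lemma~\ref{lemma:induction_basics}, Assertion~3. There too, the frequency vector is an integer-matrix image of a real vector, so its $\Q$-rank is bounded by the dimension of the space that vector lives in. Abelian induction bounds that dimension by $\ac_w(\ell)$, which is useless under a hypothesis like $p(n)\le (d-1)n$. The Rauzy graph instead bounds it by the first difference $p(k+1)-p(k)+1$. Your route therefore proves more than stated: rational independence forces $p(k+1)-p(k)\ge d-1$ for every $k$.
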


For all the reasons explained above, the authors believe that the rational independence of the letter frequencies is the natural condition in which Rauzy's Conjecture~\ref{conj:Rauzy} should be considered.

\medskip
Finally, it is worth mentioning that this condition eliminates the trivial counterexamples \eqref{eq:counterexamples} given in the Introduction, as well as the two remarkable families of counterexamples constructed by Richomme, Saari and Zamboni in 2011.

\begin{example}\label{ex:RSZ}
We justify that the two families of ternary words constructed by Richomme, Saari and Zamboni \cite{RSZ11} have rationally dependent (and in fact highly structured) letter frequencies. These families are defined as follows:  
\begin{itemize}
\item $\Fcal_1 = \Big\{\sigma (w) \text{ s.t. } w \in \{0,1\}^{\N} \text{ is uniformly recurrent and not eventually periodic}\Big\}$,\\ where $\sigma$ is the substitution 
\[\begin{array}{llll}\sigma : &0 &\mapsto &123,\\
& 1 & \mapsto & 132.
\end{array}\]
\item $\Fcal_2 = \Big\{w \in \{1,2,3\}^{\N} : \begin{array}{c}\text{$w$ is uniformly recurrent, $1$-balanced,}\\ \text{but not eventually periodic}\end{array} \Big\}$.
\end{itemize}

It follows from \cite{Gra73,Hub00} that a word $w \in \Fcal_2$ if and only if there exists a Sturmian word $w_0 \in \{0,1\}^{\N}$ such that (up to exchanging the roles of  $1$, $2$ and $3$) \[w=\mathrm{color}(w_0),\] where $\mathrm{color}$ is the map that transforms the $n$-th occurrence of the letter $0$ in $w_0$ by
\[\begin{cases}
2 \text{ if $n$ is even}, \\
3 \text{ otherwise.}
\end{cases}\]

All words in $\Fcal_1$ and $\Fcal_2$ have rationally dependent  letter frequencies. Indeed, for every $w \in \Fcal_1$, the vector of letter frequencies is $\bm{f_w}=(1/3,1/3,1/3)$, while for every $w \in \Fcal_2$,  $\bm{f_w}$ lies in the hyperplane $f_w(2)=f_w(3)$. It is also worth noting  that $\Fcal_1$ and $\Fcal_2$ do not form satisfactory ternary generalizations of Sturmian words (in contrast with, for example, Arnoux-Rauzy, natural codings of intervals exchange transformations, or cubic billiards words). 
\end{example}

\section{Proof of Theorem~\ref{th:Rauzy_conj} (Rauzy's conjecture)\label{sect:proof_Rauzy_conj}}

In this section, we prove that there is no ternary word with rationally independent letter frequencies and constant abelian complexity.
The proof relies on a renormalization process for infinite words, which we call \emph{abelian induction}, together with Kronecker's  theorem. The authors are currently working on a second paper developing the abelian induction.

\subsection{Structure of the proof of Theorem~\ref{th:Rauzy_conj}}

\medskip
Let $w \in \Acal^{\N}$ be an infinite word written over an alphabet $\Acal$. We consider the following three properties:

\medskip
\begin{tabular}{rl}
    \Hcomp & The abelian complexity of $w$ is constant. (Observe that in this case, the\\
    & constant is necessarily the alphabet size: $ \card \Acal = \ac_w(1) $.)\\
    \Hfreq & The letter frequencies of $w$ are rationally independent.\\
    \Hbal & The word $w$ is $1$-balanced on at least two letters.
\end{tabular}
\medskip

We recall that $w$ is said \emph{$C$-balanced} (with $C \in \N$) \emph{on a letter} $i \in \Acal$ if for every pair of equally long subwords $u$ and $v$ in $w$, the number of occurrences of the letter $i$ in $u$ and $v$ differs by at most $C$:
\[\Big\vert \vert u \vert_i - \vert v \vert_i \Big\vert \leq C.\] A word $w$ is  \emph{$C$-balanced} if it is $C$-balanced on all its letters. It is easy to check (see also \cite{RSZ11}) that the balance constant $C$ of an infinite word $w$ and its abelian complexity $\ac_w$ are related. More precisely, for every  $n \in\N_{>0}$, we have
\begin{equation}\label{eq:balanceabcomplexity}
    C+1 \leq \ac_w(n) \leq (C+1)^{\card \Acal-1}.
\end{equation}
(The first inequality follows from a discrete intermediate value argument.)

\medskip
To prove Theorem~\ref{th:Rauzy_conj}, we argue by contradiction and assume that there exists a word $w\in\{1,2,3\}^\N$ satisfying \Hcomp and \Hfreq. We derive a contradiction in two steps.
\begin{itemize}
    \item \emph{Step 1: abelian induction.} We prove that
     the existence of a ternary word $w$ satisfying \Hcomp and \Hfreq implies the existence a ternary word $w'$ also satisfying \Hcomp and \Hfreq, and satisfying furthermore the hypothesis \Hbal.

In order to do so, we study the \emph{abelian induction} over words with constant abelian complexity equal to $3$. Namely, for every length $\ell\in\N_{>0}$ we consider the new word $I_\ell(w)$ obtained from $w$ as follows. Denote by $\alpha$, $\beta$ and $\gamma$ the three abelian classes of equivalence of length-$\ell$ subwords of $w$:
\[
    \lan_\ell(w)/_{\sim_{ab}}=\{\alpha,\beta,\gamma\}.
\]
Then, the word $I_\ell(w)$ is the right infinite word written on the new ternary alphabet $\{\alpha,\beta,\gamma\}$ whose $n$-th letter, for every $n \in \N$, is given by: 
\[I_\ell(w)[n]=\overline{w[n\ell:(n+1)\ell-1]}.\]
In this expression, $w[n\ell:(n+1)\ell-1]$ denotes the length-$\ell$ subword of $w$ spanning from  position $n\ell$  to position $(n+1)\ell-1$ (both included) and $\overline{w[n\ell:(n+1)\ell-1]}$ its abelian class of equivalence. (Examples and properties of the abelian induction will be given in Section~\ref{sss:preliminaries_ab_induction}.)

We prove that:
\begin{enumerate}
    \item If $w$ is a ternary word satisfying \Hcomp and \Hfreq, then for every length $\ell\in\N_{>0}$, $I_\ell(w)$ is also a ternary word satisfying \Hcomp and \Hfreq; see Proposition~\ref{prop:stabilite_induction}.
    \item If $w$ is a ternary word satisfying \Hcomp and \Hfreq, then, there exists $\ell\in\N_{>0}$ such that $I_\ell(w)$  satisfies furthermore the third condition \Hbal; see Proposition~\ref{prop:induction_error}.
\end{enumerate}
    \item \emph{Step 2: the contradiction.} We prove that no ternary word can satisfy the three conditions \Hcomp, \Hfreq, and \Hbal simultaneously; see Proposition~\ref{prop:last_step}. Indeed, assume that such a word $w' \in \{1,2,3\}^{\N}$ exists, and that $1$ and $2$ are the two $1$-balanced letters of $w'$. Denote by $\sigma_1$ and $\sigma_2$ the substitutions defined by
\[
    \begin{array}{rlrl}
        \sigma_1: & 1\mapsto 1 &\hspace{2cm}\sigma_{2}: & 1\mapsto 0\\
        & 2\mapsto 0 && 2\mapsto 2\\
        & 3\mapsto 0 && 3\mapsto 0.
    \end{array}
\]
We prove that $\sigma_1(w')$ and $\sigma_2(w')$ are Sturmian words.  We then exploit this fact  to justify that there exists $n\in\N$ such that $\sigma_1(w')[n]=1$ and $\sigma_2(w')[n]=2$. This will be our contradiction: indeed, $\sigma_1(w')[n]=1$ implies $w'[n]=1$, while $\sigma_2(w')[n]=2$ implies $w'[n]=2$. At this point, the proof of Theorem~\ref{th:Rauzy_conj} (Rauzy's conjecture) will be complete.
\end{itemize}


\subsection{Step 1: Abelian induction}\label{ss:step1}

The aim of this subsection is to prove the following two propositions.

\begin{proposition}\label{prop:stabilite_induction}
Let $d \geq 1$. If  $w$ is an infinite $d$-ary word satisfying \Hcomp and \Hfreq, then for every $\ell\in\N_{>0}$, the induced word $I_\ell(w)$ is also a $d$-ary word satisfying \Hcomp and \Hfreq.
\end{proposition}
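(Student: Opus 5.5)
The plan is to encode the induction by an integer matrix and to show that \Hfreq forces this matrix to be invertible; both \Hcomp and \Hfreq then transfer to $I_\ell(w)$ by linear algebra. Throughout, for a finite word $u$ let $\Psi(u)\in\N^d$ denote its Parikh vector, that is, its vector of letter counts. By \Hcomp we have $\ac_w(\ell)=d$. Let $\alpha_1,\dots,\alpha_d$ be the abelian classes of length $\ell$, and let $M$ be the $d\times d$ integer matrix whose $j$-th column is the common Parikh vector of the class $\alpha_j$. Let $u$ be a factor of $I_\ell(w)$ of length $n$, and let $U$ be the corresponding factor of $w$ of length $n\ell$ starting at a multiple of $\ell$. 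Then $\Psi(U)=M\,\Psi(u)$.

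\emph{Step 1: $M$ is invertible.} Suppose there is a nonzero $q\in\Q^d$ with $qM=0$. Then $q\cdot\Psi(v)=0$ for every length-$\ell$ factor $v$ of $w$. Summing over the consecutive blocks of $\pref_{n\ell}(w)$ gives $q\cdot\Psi(\pref_{n\ell}(w))=0$. Dividing by $n\ell$ and letting $n\to\infty$ gives $q\cdot \bm{f_w}=0$. The frequencies exist because the abelian complexity is bounded, so this contradicts \Hfreq.

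\emph{Step 2: frequencies of $I_\ell(w)$ and \Hfreq.} The abelian complexity of $I_\ell(w)$ is bounded (see Step 3), so its frequencies $\bm{f_I}$ exist. Applying $\Psi(U)=M\Psi(u)$ to prefixes gives $\ell\,\bm{f_w}=M\bm{f_I}$, hence $\bm{f_I}=\ell M^{-1}\bm{f_w}$. Suppose $q\cdot\bm{f_I}=0$ with $q\in\Q^d$ nonzero. Then $(qM^{-1})\cdot\bm{f_w}=0$, and $qM^{-1}\neq 0$ is rational, contradicting \Hfreq. In particular every coordinate of $\bm{f_I}$ is nonzero, so all $d$ letters actually occur in $I_\ell(w)$.

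\emph{Step 3: \Hcomp.} \emph{Upper bound.} Abelian equivalence in $I_\ell(w)$ is equality of $\Psi(u)$. Since $M$ is injective, distinct classes of length $n$ in $I_\ell(w)$ give distinct classes of length $n\ell$ in $w$. Hence $\ac_{I_\ell(w)}(n)\le \ac_w(n\ell)=d$.

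\emph{Lower bound.} I want to avoid Proposition~\ref{prop:belowbound_abcomp}, since it is proved later via induction. Instead I will argue directly that any word $x$ satisfying \Hfreq over $d$ letters has $\ac_x(n)\ge d$. Cutting $\pref_{kn}(x)$ into $k$ consecutive length-$n$ blocks shows that $n\bm{f_x}$ is a limit of averages of Parikh vectors of length-$n$ factors. Hence $n\bm{f_x}$ lies in the convex hull of these Parikh vectors. If there were at most $d-1$ of them, they would span an affine subspace of the hyperplane $\{\sum x_i=n\}$ of dimension at most $d-2$. Such a subspace is cut out by a rational equation $q\cdot x=c$ with $q$ not proportional to $(1,\dots,1)$, because the vectors are integral. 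Evaluating at $n\bm{f_x}$ and using $\sum_i f_x(i)=1$ gives the relation $(q-\tfrac{c}{n}(1,\dots,1))\cdot \bm{f_x}=0$. This is a nonzero rational relation, which is a contradiction. Applying this to $I_\ell(w)$ with Step 2 gives $\ac_{I_\ell(w)}(n)=d$ for all $n$.

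The main obstacle is Step 1, the invertibility of $M$; it is the only place where \Hfreq for $w$ is really used, and the block-summation argument above should settle it. One point to double-check is the convex-hull step in the lower bound: the averages of block Parikh vectors converge to $n\bm{f_x}$, and the set of length-$n$ Parikh vectors is finite, so the limit indeed lies in their (closed) convex hull.
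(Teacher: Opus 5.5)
Your proof is correct and follows the paper's route. You get invertibility of the induction matrix from \Hfreq (Lemma~\ref{lemma:GL3}), transfer rationally independent frequencies via $\bm{f'}=\ell\bm{M}_\ell(w)^{-1}\bm{f_w}$, and bound $\ac_{I_\ell(w)}(n)\le\ac_w(n\ell)$ using injectivity of $\bm{M}_\ell(w)$. The differences are minor: the paper gets existence of the induced frequencies from uniqueness of subsequential limits rather than from bounded abelian complexity, and your convex-hull lower bound is an unpacked version of the paper's proof of Proposition~\ref{prop:belowbound_abcomp}, which is proved beforehand from Lemma~\ref{lemma:induction_basics} alone, so citing it would not have been circular.
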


\begin{proposition}\label{prop:induction_error}
If  $w$ is an infinite ternary word satisfying \Hcomp and \Hfreq, then there exists $\ell\in\N_{>0}$ such that the induced word $I_\ell(w)$ is a ternary word satisfying the three conditions \Hcomp, \Hfreq and \Hbal.
\end{proposition}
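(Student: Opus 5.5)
The plan is to translate balancedness of $I_\ell(w)$ into a statement about the Parikh vectors of $w$ along lengths that are multiples of $\ell$. We then choose $\ell$ by Kronecker's theorem so that the three Parikh vectors of length $\ell$ have a convenient shape.

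\emph{Setup.} Let $P(u)\in\N^3$ denote the Parikh vector of a finite word $u$.
\begin{itemize}
\item By \Hcomp, for every $n$ the set $\{P(u) : u\in\lan_n(w)\}$ has exactly three elements.
\item By \eqref{eq:balanceabcomplexity}, $w$ is $2$-balanced. Hence these three vectors lie within bounded distance of $n\bm{f_w}$.
\item Sliding a window by one position changes $P$ by a vector $e_a-e_b$. So the three vectors at length $n$ are connected through steps of this form. Up to translation and permutation of letters, they therefore form one of finitely many shapes: a ``triangle'' $\{p,\,p+e_a-e_c,\,p+e_b-e_c\}$, or a ``path'' $\{p,\,p+e_a-e_b,\,p+e_a-e_b+e_{a'}-e_{b'}\}$.
\end{itemize}
By \Hfreq and $f_1+f_2+f_3=1$, the numbers $1,f_1,f_2$ are rationally independent. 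Kronecker's theorem then makes $\{n(f_1,f_2)\}$ dense modulo $\Z^2$, and likewise along any arithmetic progression of $n$.

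\emph{Key step 1 (choice of $\ell$).} I would choose $\ell$ such that the three length-$\ell$ classes $\alpha,\beta,\gamma$ form a triangle, say $\alpha-\gamma=e_1-e_3$ and $\beta-\gamma=e_2-e_3$. To get this, I would show that the shape at length $n$ is governed by the fractional position of $n\bm{f_w}$. The three Parikh vectors must straddle $n\bm{f_w}$, since by uniform convergence of frequencies (a consequence of balance) the averages of Parikh vectors of long windows equal $n\bm{f_w}$. So when $\{n f_1\}$ and $\{n f_2\}$ are small and $\{nf_3\}$ is close to $1$, a path shape is impossible. Kronecker then produces such an $\ell$.

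\emph{Key step 2 (reduction).} With this choice, a factor of $I_\ell(w)$ of length $m$ is the concatenation of $m$ length-$\ell$ blocks, and it contains $n_\alpha,n_\beta,n_\gamma$ letters. Its underlying word has Parikh vector $m\gamma+n_\alpha(e_1-e_3)+n_\beta(e_2-e_3)$. Hence
\[n_\alpha=|u|_1-m\gamma_1 \quad\text{and}\quad n_\beta=|u|_2-m\gamma_2,\]
where $u$ is the corresponding length-$m\ell$ factor of $w$ starting at a multiple of $\ell$. So $1$-balancedness of $I_\ell(w)$ on $\alpha$ and $\beta$ reduces to this: for every $m$, the number of occurrences of letter $1$ (resp.\ $2$) in length-$m\ell$ factors of $w$ starting at positions in $\ell\N$ takes at most two values.

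\emph{Key step 3 (main obstacle).} We must ensure that no length $m\ell$ exhibits a path shape in which letter $1$ or $2$ moves in the same direction along both steps, which would make its count spread over three values. This has to be controlled simultaneously for all $m$, which is the difficulty. I would first exploit the induced word itself. By Proposition~\ref{prop:stabilite_induction}, $I_\ell(w)$ again satisfies \Hcomp, so its three classes at each length $m$ form a triangle or a path in the letter coordinates, and a triangle is automatically $1$-balanced on every letter. For paths, I would show that the ``bad'' configurations correspond to $m\ell\bm{f_w}$ lying in a specific region modulo $\Z^3$. I would then choose $\ell$ with $\ell\bm{f_w}$ extremely close to an integer vector, refining the choice in Key step 1. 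With that choice, the vectors $m\ell\bm{f_w}$ stay away from the bad region for all $m$ up to a large bound.

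For larger $m$ this closeness argument fails. There I would fall back on the fact that shapes are inherited: the classes at length $(m+m')\ell$ are sums of classes at lengths $m\ell$ and $m'\ell$. From this I would argue by induction on $m$ that a bad shape cannot first appear at some $m$ without contradicting either the triangle structure at length $\ell$ or the $2$-balance. If this inductive control breaks down, I would try iterating the induction, taking $I_{\ell'}(I_\ell(w))=I_{\ell\ell'}(w)$, and look for a decreasing invariant measuring imbalance that must eventually vanish on two letters.
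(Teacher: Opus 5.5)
\textbf{There is a genuine gap.} Your Key step 3 is the entire content of the proposition, and it contains a list of strategies rather than an argument: closeness of $\ell\bm{f_w}$ to $\Z^3$ for small $m$, an unspecified induction on $m$, and iterating the induction.

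Moreover, the triangle choice of Key step 1 cannot help. Suppose the length-$\ell$ classes are $\alpha=\gamma+e_1-e_3$, $\beta=\gamma+e_2-e_3$ and $\gamma$. Then for every integer vector $x=(x_\alpha,x_\beta,x_\gamma)$ with zero coordinate sum,
\[\bm{M}_\ell(w)\,x=x_\alpha e_1+x_\beta e_2+x_\gamma e_3 .\]
So on differences of abelian vectors, $\bm{M}_\ell(w)$ is just a relabelling of the alphabet. Since $\bm{M}_\ell(w)$ is injective, and $I_\ell(w)$ at length $m$ and $w$ at length $m\ell$ both have exactly three classes, the aligned factors realize every class. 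Hence, as your Key step 2 already shows, $I_\ell(w)$ is $1$-balanced on $\alpha$ if and only if $w$ is $1$-balanced on the letter $1$ at every length in $\ell\N_{>0}$. You would therefore need an arithmetic progression of lengths along which $w$ is $1$-balanced on two letters. That is no easier than the statement itself.

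$2$-balancedness gives no obstruction here. An imbalance vector $2e_\delta-e_\epsilon-e_\zeta$ in $I_\ell(w)$ is sent to a vector of the same type, which is compatible with $2$-balance.

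Key step 1 is also wrong as stated. Take $n\bm{f_w}=q+(\epsilon_1,\epsilon_2,-\epsilon_1-\epsilon_2)$ with $q\in\Z^3$ and small $\epsilon_1,\epsilon_2>0$. Then $n\bm{f_w}$ is the convex combination of the path $q-e_1+e_2,\ q,\ q+e_1-e_3$ with weights $\epsilon_2$, $1-\epsilon_1-2\epsilon_2$, $\epsilon_1+\epsilon_2$. So the straddling argument does not rule out a path in which the letter $1$ takes three values.

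\textbf{The missing idea is to make the opposite choice.} If $w$ is not $1$-balanced, take $\ell$ at which some letter, say $1$, has imbalance exactly $2$. Consecutive windows differ by $0$ or some $e_a-e_b$, and $\bm{M}_\ell(w)$ is invertible (Lemma~\ref{lemma:GL3}). So, up to exchanging $2$ and $3$, the length-$\ell$ classes are forced to be the path $A=(r,s,t)$, $B=(r+1,s-1,t)$, $C=(r+2,s-1,t-1)$.

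Since $I_\ell(w)$ again satisfies \Hcomp and \Hfreq, an imbalance on a letter $\delta$ of $I_\ell(w)$ at some length $n$ forces the same path shape there. This yields aligned factors $x,y$ of $w$ of length $n\ell$ with $\ab(x)-\ab(y)=2\delta-\epsilon-\zeta$, where $\{\delta,\epsilon,\zeta\}=\{A,B,C\}$. The first coordinate of this vector is $-3$ if $\delta=A$ and $3$ if $\delta=C$, contradicting the $2$-balancedness of $w$. Hence $A$ and $C$ are $1$-balanced in $I_\ell(w)$.

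This holds uniformly in $n$ and needs no Diophantine input. The path shape stretches the letter-$1$ coordinate, and that is exactly what converts the $2$-balance of $w$ into $1$-balance of two induced letters. Kronecker's theorem is only needed afterwards, in Proposition~\ref{prop:last_step}.
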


\subsubsection{Abelian induction: definition, examples and first properties}\label{sss:preliminaries_ab_induction}

The \emph{abelian induced word} of any infinite word $w \in\Acal^{\N}$ for a given length $\ell \in \N_{>0}$ is the word $w':=I_\ell(w)$ written on the new alphabet $\mathcal{A}_\ell:=\lan_\ell(w)/_{\sim_{ab}}$ (which depends on $w$) and defined as follows: for every $n\in\N$, 
\[
    I_\ell(w)[n]:=\overline{w[n\ell:(n+1)\ell-1]},
\]
where $\overline{u}$ denotes the abelian equivalence class of the finite word $u$. 
For example, for the binary word $w= 122112122112...$, we have
\[\begin{array}{ll}
I_2(w) =  aaaaaa..., \qquad & \text {with } a = \overline{12} = \{12,21\};\\
I_3(w) = BABA..., & \text {with } A= \overline{112} = \{112,121,211\}, B=\overline{122} = \{122,212,221\}.\\
\end{array}
\]
Note that we can extend this definition to the set of finite words $u$ whose length is a multiple of $\ell$. The induced word $I_\ell(u)$ is then a  word of length  $\vert u \vert /\ell$.

\medskip
We recall that the \emph{abelian vector} (sometimes also called \emph{Parikh vector} or \emph{population vector} in the literature) of a finite word $u$ written over an alphabet $\Acal$ is the column vector that counts the number of occurrences of each letter in $u$: 
\[\ab(u):=(|u|_a)_{a\in\Acal}.\]
(For example, we have $\ab(1121)= \,^t(3,1)$.) Clearly, abelian vectors form a complete set of invariants for the equivalence relation $\sim_{ab}$.
For $w \in \Acal^{\N}$ and $\ell \in \N_{>0}$, we can thus define the rectangular matrix $\bm{M}_\ell(w)$ whose columns are the abelian vectors representing the equivalence classes in $\mathcal{A}_\ell = \lan_\ell(w)/_{\sim_{ab}}$:
\[\bm{M}_\ell(w) = \Big(\ab(\alpha)\Big)_{\alpha \in \mathcal{A}_\ell}.\]
The size of the matrix $\bm{M}_\ell(w)$ is  $(\card \Acal) \times (\card \Acal_\ell)$.

\medskip

The next lemma states some basic properties of $\bm{M}_\ell(w)$ and $I_\ell(w)$. They will be used to prove Proposition~\ref{prop:belowbound_abcomp} (which is proved at the end of this subsubsection), but also Propositions \ref{prop:stabilite_induction} and \ref{prop:induction_error}, Theorem~\ref{th:Rauzy_conj-dary}, and Corollary~\ref{cor:extraction_lineaire}.

\begin{lemma}\label{lemma:induction_basics} Let $\Acal$ be an alphabet, $w \in \Acal^{\N}$ an infinite word admitting letter frequencies, and $\ell \in \N_{>0}$.
\begin{enumerate}
\item[\emph{1.}] For every $n\in\N_{>0}$ and $u\in\lan_{n\ell}(w)$, we have $\ab(u)=\bm{M}_\ell(w)\cdot\ab(I_\ell(u))$.
\item[\emph{2.}] If the letter frequencies of $I_\ell(w)$ exist, they satisfy
\[
    (f_w(a))_{a \in \Acal} = \frac{1}{\ell} \bm{M}_\ell(w)\cdot  (f_{I_\ell(w)}(\alpha))_{\alpha\in \Acal_\ell}.
\]
If not, the relation still holds by replacing the vector of letter frequencies of $I_\ell(w)$ by any $\bm{f'}=(f'(\alpha))_{\alpha \in \Acal_{\ell}} \in F$, where $F$ is the set of all subsequential limits of the sequence
\[ \Bigg(\ab\Big(\frac{\pref_n(I_\ell(w))}{n}\Big)\Bigg)_n.\]

\item[\emph{3.}] We have
\[\dim \Span_{\Q} (f_w(a): a \in \Acal) \leq \min_{\bm{f'} \in F} \dim  \Span_{\Q} (f'(\alpha) : \alpha \in \Acal_\ell).\]
\end{enumerate}
\end{lemma}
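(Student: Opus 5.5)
We prove the three items in order, each following from the previous one.

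For item 1, we write $u\in\lan_{n\ell}(w)$ as a concatenation $u=u_0u_1\cdots u_{n-1}$ of length-$\ell$ blocks, where each $u_i\in\lan_\ell(w)$. The abelian vector is additive, so $\ab(u)=\sum_i \ab(u_i)$. Since $\ab(u_i)$ depends only on the class $\overline{u_i}\in\Acal_\ell$, it equals the column of $\bm{M}_\ell(w)$ indexed by $\overline{u_i}$. Grouping the terms by class, the coefficient of the column indexed by $\alpha$ is the number of $i$ with $\overline{u_i}=\alpha$, which is $|I_\ell(u)|_\alpha$. This gives $\ab(u)=\bm{M}_\ell(w)\cdot\ab(I_\ell(u))$. (Here $I_\ell(u)$ is taken in the sense of the extension to finite words whose length is a multiple of $\ell$, with letters in $\Acal_\ell$; this is well defined because each block lies in $\lan_\ell(w)$.)

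For item 2, we apply item 1 to $u=\pref_{n\ell}(w)$, for which $I_\ell(u)=\pref_n(I_\ell(w))$. Dividing by $n\ell$ gives
\[
\frac{\ab(\pref_{n\ell}(w))}{n\ell}=\frac{1}{\ell}\bm{M}_\ell(w)\cdot\frac{\ab(\pref_n(I_\ell(w)))}{n}.
\]
The left side converges to $(f_w(a))_a$ because $w$ has letter frequencies, and the subsequence $n\ell$ of a convergent sequence has the same limit. The right-hand vectors lie in the compact simplex, so subsequential limits exist and $F\neq\emptyset$. Passing to the limit along any subsequence converging to $\bm{f'}\in F$, and using that matrix multiplication is continuous, gives the relation for every $\bm{f'}\in F$. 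When the frequencies of $I_\ell(w)$ exist, $F$ is a singleton and the first form follows.

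For item 3, we fix $\bm{f'}\in F$. By item 2, each $f_w(a)$ equals $\frac1\ell\sum_\alpha \bm{M}_\ell(w)_{a,\alpha}f'(\alpha)$. The entries of $\bm{M}_\ell(w)$ are integers and $\frac1\ell$ is rational, so each $f_w(a)$ lies in $\Span_\Q(f'(\alpha):\alpha\in\Acal_\ell)$. Hence $\Span_\Q(f_w(a))$ is a subspace of that span, and the inequality between dimensions follows. Since $\bm{f'}$ was arbitrary, we may take the minimum over $F$; the minimum exists because the dimensions are nonnegative integers.

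None of these steps is a real obstacle. The only points needing care are the bookkeeping identity $I_\ell(\pref_{n\ell}(w))=\pref_n(I_\ell(w))$ and the observation that, since $F$ may contain several limits, the relation in item 2 must be established for each subsequential limit separately.
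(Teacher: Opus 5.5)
Your proposal is correct and follows the paper's proof essentially step by step: the block decomposition $u=u_1\cdots u_n$ for item 1; applying item 1 to $\pref_{\ell n}(w)$ via $I_\ell(\pref_{\ell n}(w))=\pref_n(I_\ell(w))$ and passing to the limit along a subsequence converging to $\bm{f'}\in F$ for item 2; and the integrality of $\bm{M}_\ell(w)$ for item 3. Your extra remarks (that $F\neq\emptyset$ by compactness, and that the minimum over $F$ is well defined) are small details the paper leaves implicit.
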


\begin{remark}
In Lemma~\ref{lemma:induction_basics}, Assertion 1, the vectors $\ab(u)$ and $\ab(I_\ell(u))$ are of different nature (they may even have different dimension). Indeed, $\ab(u)$ counts the number of occurrences of the letters in $\Acal$, while $\ab(I_\ell(u))$ counts the number of occurrences of the letters in the induced alphabet $\Acal_\ell$.
\end{remark}

\begin{proof}
Denote by $u_1,\ldots,u_n$ the $\ell$-length words over $\Acal$ such that $u = u_1\cdots u_n$. We have

\[\vert u \vert_a = \sum_{i = 1}^n \vert u_i \vert_a = \sum_{i = 1}^n \vert \overline{u_i} \vert_a = \sum_{\alpha \in\Acal_\ell} \vert I_l(u)\vert_\alpha \cdot \vert \alpha \vert_a\]
from which the desired matricial expression of Assertion $1$ follows.

\bigskip

\noindent
We now establish Assertion $2$. Let $\bm{f'} \in F$, and $(n_k)_{k \in \N}$ such that 
\[ \frac{\ab(\pref_{n_k}(I_\ell(w)))}{n_k} \underset{k\to\infty}{\longrightarrow} \bm{f'}.\]
By the definition of abelian induction on finite words, we have 
\[\pref_{n_k}(I_\ell(w)) = I_\ell(\pref_{\ell n_k}(w))\] for every ${k \in \N}$. Applying Assertion 1 to $u:=\pref_{\ell n_k}(w)$, we 
obtain
\begin{equation}\label{eqloc:prefrequence}
    \ab(\pref_{\ell n_k}(w)) =  \bm{M}_\ell(w)\cdot \ab(\pref_{n_k }(I_\ell(w))).\end{equation}
Dividing both sides of Equation \eqref{eqloc:prefrequence} by $\ell n_k$, and letting $k \to \infty$, we obtain the desired expression
\begin{equation}\label{eq:assertion2}
    \bm{f_w} = \frac{1}{\ell} \bm{M}_\ell(w)\cdot  \bm{f'},
\end{equation}
where $\bm{f_w}$ is the vector of letter frequencies of $w$, which was assumed to exist.

\bigskip

\noindent
Finally, the proof of Assertion $3$  follows immediately from the relation \eqref{eq:assertion2}, noticing that the matrix $\bm{M}_\ell(w)$ has integer entries.  
\end{proof}

We are already in a position to prove Proposition~\ref{prop:belowbound_abcomp}, announced in the Introduction.

\begin{proof}[Proof of Proposition~\ref{prop:belowbound_abcomp}] The case $d=1$ being trivial, let $d\geq 2$. We proceed by contraposition. If there exists $n \in \N_{>0}$ such that $\ac_w(n) \leq d-1$, then the induced word $I_n(w)$ is written with at most $d-1$ letters. By Lemma~\ref{lemma:induction_basics} Assertion $3$, we thus have
\[\dim \Span_{\Q} (f_w(a): a \in \{1,\ldots,d\}) \leq d-1,\]meaning that $w$ has rational dependent letter frequencies.
\end{proof}


\subsubsection{Proof of Proposition \ref{prop:stabilite_induction}}
The case $d=1$ being trivial, let $d\geq 2$. Let $w \in \{1,\ldots,d\}^{\N}$ be a $d$-ary word satisfying both conditions \Hcomp and \Hfreq, and let $\ell\in\N_{>0}$. We want to prove that the induced word $I_\ell(w)$ is also a $d$-ary word satisfying   \Hcomp and \Hfreq. The proof is made of three lemmas.

\begin{lemma}\label{lemma:GL3} The induced word $I_\ell(w)$ is a $d$-ary word, and  $\bm{M}_\ell(w)\in\GL_d(\Q)$.
\end{lemma}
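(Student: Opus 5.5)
The first claim is a direct consequence of \Hcomp. Since $w$ has constant abelian complexity equal to $d$, the induced alphabet $\Acal_\ell=\lan_\ell(w)/_{\sim_{ab}}$ has cardinality $\ac_w(\ell)=d$. Hence $I_\ell(w)$ is written over an alphabet with at most $d$ letters. Moreover, $\bm{M}_\ell(w)$ is a square $d\times d$ matrix with nonnegative integer entries. One small point needs attention: we should make sure that every class of $\Acal_\ell$ actually occurs among the blocks $w[n\ell:(n+1)\ell-1]$, so that $I_\ell(w)$ is genuinely $d$-ary. I will obtain this at the end from the rank argument below, since a letter with zero frequency cannot contribute.

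For invertibility, the plan is to use Lemma~\ref{lemma:induction_basics}, Assertion 2. The sequence $\ab(\pref_n(I_\ell(w)))/n$ lies in the compact simplex, so $F\neq\emptyset$; pick any $\bm{f'}\in F$. The lemma then gives
\[
\bm{f_w}=\tfrac{1}{\ell}\,\bm{M}_\ell(w)\,\bm{f'}.
\]
Now suppose $\bm{M}_\ell(w)$ were singular. Its entries are integers, so it has a nonzero rational vector $\bm{q}$ in the left kernel, $^t\bm{q}\,\bm{M}_\ell(w)=0$. This gives $^t\bm{q}\,\bm{f_w}=0$, which is a nontrivial rational linear relation among the letter frequencies $f_w(1),\dots,f_w(d)$ and contradicts \Hfreq. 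Hence $\bm{M}_\ell(w)\in\GL_d(\Q)$.

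It remains to see that all $d$ letters really occur in $I_\ell(w)$. Suppose some class $\alpha\in\Acal_\ell$ never appears as an aligned block. Then every $\bm{f'}\in F$ has $f'(\alpha)=0$, so $\bm{f_w}$ lies in the $\Q$-span of the remaining $d-1$ columns of $\bm{M}_\ell(w)$. Equivalently, this is the situation of Assertion 3 applied with the effective alphabet of $I_\ell(w)$ having at most $d-1$ letters. That would give $\dim\Span_\Q(f_w(a))\le d-1$, again contradicting \Hfreq.

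I do not expect a real obstacle here. The only delicate point is the distinction between the alphabet $\Acal_\ell$ of all abelian classes of length $\ell$ and the letters that actually occur at positions that are multiples of $\ell$. The frequency/rank argument handles this uniformly, exactly as in the proof of Proposition~\ref{prop:belowbound_abcomp}.
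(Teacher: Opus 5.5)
Your proof is correct and follows the paper's argument. A nonzero rational vector in the left kernel of the integer matrix $\bm{M}_\ell(w)$ would, via Assertion~2 of Lemma~\ref{lemma:induction_basics}, yield a rational relation among the letter frequencies of $w$, contradicting \Hfreq. Your extra check that every class of $\Acal_\ell$ actually occurs in $I_\ell(w)$ is a fair point that the paper leaves implicit (it follows later from Lemma~\ref{lemma:induced_frequencies_exist}), and your appeal to Assertion~3 settles it. Strictly speaking, though, $\bm{f_w}$ lies in the $\R$-span, not the $\Q$-span, of the remaining columns.
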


\begin{proof}First, it immediately follows from \Hcomp that $\ac_\ell(w)=d$. Therefore, the induced alphabet $\Acal_\ell$ is  made of $d$ letters, and $\bm{M}_\ell(w)$ is a $d\times d$ square matrix. 
We now prove that $\ker \,^t\bm{M}_\ell=\{0\}$. (Once this is proven, we conclude that $^t\bm{M}_\ell$ and $\bm{M}_\ell$ belong to $\GL_d(\Q)$.) Keeping the notations of Lemma~\ref{lemma:induction_basics}, we denote by $\bm{f_w}$ the column vector formed by the letter frequencies of $w$ (which exist by \Hcomp), and by $\bm{f'} \in F$ any subsequential limit of the sequence 
\[ \Bigg(\ab\Big(\frac{\pref_n(I_\ell(w))}{n}\Big)\Bigg)_n\]
in the compact set $[0,1]^d$.
Let $\bm{q}\in \Q^{d}$ be in the kernel of $^t\bm{M}_\ell(w)$, and denote by $\langle \bm{q} | \bm{f_w}\rangle$ the inner product of $\bm{q}$ and $\bm{f_w}$. We have \[
    \langle \bm{q} | \bm{f_w}\rangle = \Big\langle \bm{q} \; \Big|\; \frac{1}{\ell}\bm{M}_\ell(w)\cdot \bm{f'}\Big\rangle = \frac{1}{\ell}\big\langle{}^t\bm{M}_\ell(w)\cdot \bm{q} \; \big|\; \bm{f'}\big\rangle = 0
\]
(in which the first equality stems from Lemma \ref{lemma:induction_basics}, Assertion $2$). It follows from the calculation above and the rational independence of the letter frequencies of $w$ that $\bm{q}=0$. The proof of the lemma is complete.
\end{proof}

\begin{lemma}\label{lemma:induced_frequencies_exist}
The letter frequencies of $I_\ell(w)$ exist and are rationally independent.
\end{lemma}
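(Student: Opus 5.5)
The plan is to combine Lemma~\ref{lemma:induction_basics}, Assertion~2, with the invertibility of $\bm{M}_\ell(w)$ given by Lemma~\ref{lemma:GL3}. This should pin down every subsequential limit of the normalized abelian vectors of the prefixes of $I_\ell(w)$.

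\textbf{Existence of the frequencies.} Let $F$ be the set of subsequential limits of $\big(\ab(\pref_n(I_\ell(w)))/n\big)_n$. It is nonempty, since the sequence lies in the compact set $[0,1]^d$. Take any $\bm{f'}\in F$. By Lemma~\ref{lemma:induction_basics}, Assertion~2, we have $\bm{f_w}=\frac1\ell \bm{M}_\ell(w)\bm{f'}$. Lemma~\ref{lemma:GL3} says $\bm{M}_\ell(w)$ is invertible, so
\[\bm{f'}=\ell\,\bm{M}_\ell(w)^{-1}\bm{f_w}.\]
Hence $F$ is a singleton. A bounded sequence in $\R^d$ with a unique accumulation point converges, so the limits $\lim_n \ab(\pref_n(I_\ell(w)))/n$ exist. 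These limits are exactly the letter frequencies of $I_\ell(w)$, taken along all $n$ and not only along multiples.

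\textbf{Rational independence.} I would use Lemma~\ref{lemma:induction_basics}, Assertion~3, with $F=\{\bm{f_{I_\ell(w)}}\}$. It gives
\[\dim\Span_\Q(f_{I_\ell(w)}(\alpha))\geq \dim\Span_\Q(f_w(a))=d.\]
The left-hand side is at most $d$, because $I_\ell(w)$ is $d$-ary by Lemma~\ref{lemma:GL3}. So it equals $d$, and the frequencies of $I_\ell(w)$ are rationally independent.

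One could also argue directly. If $\langle \bm{q}\,|\,\bm{f'}\rangle=0$ with $\bm{q}\in\Q^d$, then
\[\langle {}^t\bm{M}_\ell(w)^{-1}\bm{q}\,|\,\bm{f_w}\rangle=0,\]
and ${}^t\bm{M}_\ell(w)^{-1}\bm{q}$ is a rational vector. Condition \Hfreq then forces it to vanish, so $\bm{q}=0$.

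\textbf{Main obstacle.} The only delicate point is the existence step: passing from uniqueness of the accumulation point to convergence. I expect this to be routine by compactness, so there is no real obstacle.
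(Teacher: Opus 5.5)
Your proof is correct and follows the paper's argument. The paper likewise combines the invertibility of $\bm{M}_\ell(w)$ (Lemma~\ref{lemma:GL3}) with Assertion~2 of Lemma~\ref{lemma:induction_basics} to force every subsequential limit to equal $\ell\,\bm{M}_\ell(w)^{-1}\bm{f_w}$, and then uses Assertion~3 to get rational independence; you additionally spell out the compactness step that the paper leaves implicit, and your alternative direct argument via ${}^t\bm{M}_\ell(w)^{-1}\bm{q}$ is also valid.
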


\begin{proof}
    Since the matrix $\bm{M}_\ell(w)$ is invertible by Lemma~\ref{lemma:GL3},  Assertion $2$ of Lemma~\ref{lemma:induction_basics} is equivalent to: 
\[
    \bm{f'}=\ell \bm{M}_\ell(w)^{-1}\cdot \bm{f_w}
\]
for every $\bm{f'} \in F$. This ensures that there is only one possible value for $\bm{f'}$--- in other words, that the induced word $I_l(w)$ admits letter frequencies. 
 Moreover, by Lemma~\ref{lemma:induction_basics} Assertion $3$, we have
\[
     \dim\Span_{\Q} (f_w(i):i \in \{1,\ldots,d\})\leq \dim \Span_\Q(f_{I_\ell(w)}(\alpha):\alpha \in \Acal_\ell).
\]
Since the left member is equal to $d$ by \Hfreq, and since the right member is bounded above by $\card \Acal_\ell = d$, we conclude that $\dim \Span_\Q(f_{I_\ell(w)}(\alpha):\alpha \in \Acal_\ell)=d$. The proof of the lemma is complete.
\end{proof}

\begin{lemma}\label{lemma:induced_abconstant}
    The abelian complexity of the induced word $I_\ell(w)$ is constant: for every $n\in\N_{>0}$, we have 
    \[\ac_{I_\ell(w)}(n)= d.\]
\end{lemma}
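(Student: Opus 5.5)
The plan is to squeeze $\ac_{I_\ell(w)}(n)$ between two bounds, both equal to $d$: an upper bound transferred from $w$ through the invertible matrix $\bm{M}_\ell(w)$, and a lower bound from Proposition~\ref{prop:belowbound_abcomp}.

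\emph{Upper bound $\ac_{I_\ell(w)}(n)\leq d$.} Fix $n\in\N_{>0}$ and let $v$ be a length-$n$ subword of $I_\ell(w)$, say $v=I_\ell(w)[m:m+n-1]$. By the definition of abelian induction, $v=I_\ell(u)$ with $u:=w[m\ell:(m+n)\ell-1]\in\lan_{n\ell}(w)$. Lemma~\ref{lemma:induction_basics}, Assertion~1, gives $\ab(u)=\bm{M}_\ell(w)\cdot\ab(v)$. Since $\bm{M}_\ell(w)\in\GL_d(\Q)$ by Lemma~\ref{lemma:GL3}, this can be inverted:
\[
\ab(v)=\bm{M}_\ell(w)^{-1}\cdot\ab(u).
\]
Thus the abelian class of $v$ is determined by the abelian class of the length-$n\ell$ subword $u$ of $w$. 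This defines an injective map from $\lan_n(I_\ell(w))/_{\sim_{ab}}$ into $\lan_{n\ell}(w)/_{\sim_{ab}}$. (It need not be surjective, since only subwords starting at multiples of $\ell$ arise, but injectivity suffices.) By \Hcomp for $w$, the target set has exactly $d$ elements, so $\ac_{I_\ell(w)}(n)\leq d$.

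\emph{Lower bound $\ac_{I_\ell(w)}(n)\geq d$.} By Lemma~\ref{lemma:GL3}, $I_\ell(w)$ is a $d$-ary word. By Lemma~\ref{lemma:induced_frequencies_exist}, its letter frequencies exist and are rationally independent. Proposition~\ref{prop:belowbound_abcomp} therefore applies to $I_\ell(w)$ and yields $\ac_{I_\ell(w)}(n)\geq d$ for every $n$. Combining the two bounds gives $\ac_{I_\ell(w)}(n)=d$ for all $n\in\N_{>0}$.

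I do not expect a real obstacle. The only points that need care are the correct identification of length-$n$ subwords of $I_\ell(w)$ with induced images of aligned length-$n\ell$ subwords of $w$, and the use of the invertibility of $\bm{M}_\ell(w)$, which is exactly what makes the transfer of abelian classes injective.
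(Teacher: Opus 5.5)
Your proposal is correct and follows essentially the same route as the paper. The paper also gets the lower bound from Proposition~\ref{prop:belowbound_abcomp} together with Lemma~\ref{lemma:induced_frequencies_exist}. For the upper bound it also inverts $\bm{M}_\ell(w)$ in Assertion~1 of Lemma~\ref{lemma:induction_basics}, which shows that distinct abelian classes in $\lan_n(I_\ell(w))$ come from distinct classes in $\lan_{n\ell}(w)$, so $\ac_{I_\ell(w)}(n)\leq \ac_w(n\ell)=d$.
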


\begin{proof}
    Let $n\in \N_{>0}$. It follows from Proposition~\ref{prop:belowbound_abcomp} and Lemma~\ref{lemma:induced_frequencies_exist} that 
    $\ac_{I_\ell(w)}(n)\geq d$. It thus remains to prove that $\ac_{I_\ell(w)}(n)\leq d$. To do this, we justify that two non abelian-equivalent subwords in $\lan_n(I_\ell(w))$ always come from two non abelian-equivalent subwords in $\lan_{n\ell}(w)$. Indeed, by Lemma~\ref{lemma:induction_basics} Assertion 1, we have  
\[
    \ab(u')-\ab(v')=\bm{M}_\ell(w)^{-1}\cdot\big[\ab(u)-\ab(v)\big]
\]
for every $u',v'\in \lan_n(I_\ell(w))$ and every $u,v\in\lan_{n\ell}(w)$ such that $u'=I_\ell(u)$ and $v' = I_\ell(v)$. This expression shows that if $\ab(u')-\ab(v') \neq 0$, then  $\ab(u)-\ab(v)\neq 0$, hence
\[
    \ac_{I_\ell(w)}(n) = \card(\lan_{n}(I_\ell(w))/_{\sim_{ab}}) \leq \card(\lan_{n\ell}(w)/_{\sim_{ab}}) = \ac_w(n\ell) = d.
\]
We thus have $\ac_{I_\ell(w)}(n)\leq d$. The proof of the lemma is complete.
\end{proof}

It follows from Lemmas~\ref{lemma:GL3}, \ref{lemma:induced_frequencies_exist} and \ref{lemma:induced_abconstant} that $I_\ell(w)$ is a $d$-ary word that satisfies \Hcomp and \Hfreq. The proof of Proposition~\ref{prop:stabilite_induction} is complete.


\subsubsection{Proof of Proposition \ref{prop:induction_error}}

Let $w \in \{1,2,3\}^{\N}$ be an infinite ternary word satisfying \Hcomp and \Hfreq. We  want to prove that there exists $\ell \in \N_{>0}$ such that  $I_\ell(w)$, which is already known to satisfy \Hcomp and \Hfreq by Proposition~\ref{prop:stabilite_induction}, also satisfies \Hbal. 

First, observe that if  $w$ is $1$-balanced, there is nothing to prove: the word $I_1(w)=w$ trivially satisfies \Hbal. In the remainder of the proof, we assume that $w$ is not $1$-balanced. Since $w$ is, however, $2$-balanced by \Hcomp and Equation~\eqref{eq:balanceabcomplexity}, there exist $\ell\in\N_{>0}$, $u,v\in\lan_\ell(w)$, and $i\in\{1,2,3\}$ such that 
\begin{equation}\label{eq:2balanced}
|u|_i-|v|_i=2.\end{equation}  
Without loss of generality, we assume that $i=1$.  We  prove that the induced word $w':=I_\ell(w)$ satisfies \Hbal.

\medskip
We begin by describing the abelian vectors of subwords of length $\ell$ in $w$.

\begin{lemma}\label{lemma:language_ijk}
Up to permuting the role of the letters $2$ and $3$ in $w$, there exists $r,s,t\in\N$ such that
\begin{equation}\label{eq:abelian_language}
    \ab\big(\lan_\ell(w)\big) = \left\{\begin{pmatrix} r \\s \\t \end{pmatrix}, \begin{pmatrix} r+1 \\s-1 \\t \end{pmatrix}, \begin{pmatrix} r+2 \\s-1 \\t-1 \end{pmatrix}\right\}.
\end{equation}
\end{lemma}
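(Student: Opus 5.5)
The plan is to describe the three abelian vectors of length-$\ell$ subwords, using only the constant abelian complexity ($\ac_w(\ell)=3$, by \Hcomp) and the fact that the abelian vectors of factors of a fixed length are connected under sliding. Sliding a window of length $\ell$ one step to the right changes its abelian vector by $e_b-e_a$, where $a$ is the letter dropped and $b$ the letter added. The first step is therefore to consider the graph on $\ab(\lan_\ell(w))$ whose edges are differences of the form $e_b-e_a$. This graph is connected, because any two windows are joined by a chain of one-step slides through windows of $w$ (each step is either trivial or an edge, and we may always slide rightwards from the earlier occurrence to the later one). It has exactly $3$ vertices, and each vertex has coordinate sum $\ell$.

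The second step uses the sliding chain from $v$ to $u$. The first coordinate changes by at most $1$ per step, and by \eqref{eq:2balanced} it must go from $|v|_1$ to $|v|_1+2$. Hence there is a vertex with first coordinate exactly $|v|_1+1$. The three vertices therefore have first coordinates $r$, $r+1$ and $r+2$, with $r=|v|_1$, and these values are pairwise distinct, so the three vertices are exactly one of each. Call them $A$ (first coordinate $r$), $B$ (first coordinate $r+1$) and $C$ (first coordinate $r+2$).

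The third step determines the edges. A single slide changes the first coordinate by at most $1$, so $A$ and $C$ are not adjacent, and connectivity forces the edges $A$–$B$ and $B$–$C$. Write $A=(r,s,t)$. Then $B-A=e_1-e_j$ for some $j\in\{2,3\}$, and after permuting $2$ and $3$ we may assume $j=2$, so that $B=(r+1,s-1,t)$. Similarly $C-B=e_1-e_k$ with $k\in\{2,3\}$, which gives either $C=(r+2,s-1,t-1)$, matching \eqref{eq:abelian_language}, or $C=(r+2,s-2,t)$.

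The main obstacle is ruling out the collinear case $C=(r+2,s-2,t)$. In that case the third coordinate is the same, $t$, for all factors of length $\ell$. Lemma~\ref{lemma:GL3} states that $\bm{M}_\ell(w)$ is invertible, yet its three columns $A$, $B$, $C$ would be affinely collinear, since $A+C=2B$. This gives the linear relation $A-2B+C=0$ among the columns, contradicting invertibility. (Equivalently, the vectors $(r+m,s-m,t)$ together with their constant third coordinate would contradict rational independence of the frequencies.) So $C=(r+2,s-1,t-1)$ and the lemma follows.
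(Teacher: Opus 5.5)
Your proof is correct and follows the same route as the paper: the sliding-window constraints leave only the triple in \eqref{eq:abelian_language} and the collinear triple $(r,s,t),(r+1,s-1,t),(r+2,s-2,t)$, and the collinear one is excluded because $A-2B+C=0$ contradicts Lemma~\ref{lemma:GL3}. Your argument through individual slides (consecutive windows differ by $0$ or $e_b-e_a$, so $A$ and $C$ can only be connected through $B$) justifies the case reduction more sharply than the paper's coordinatewise ``consecutive integers'' criterion, which taken literally would not exclude a triple such as $(r,s,t),(r+1,s+1,t-2),(r+2,s-1,t-1)$, whose matrix is invertible (determinant $-3\ell$).
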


\begin{proof}
It suffices to examine all the triplets of vectors of equal sum whose set of top coordinates respects Equation~\eqref{eq:2balanced},
 and whose $i$-th coordinates form a set of consecutive integers, for all $i \in \{1,2,3\}$. (This second condition is imposed by the fact that when a window of size $\ell$ is slid along $w$, the number of occurrences of the letter $i$ in the window changes by at most $1$ at each step.) 

Up to exchanging the roles of the letters $2$ and $3$, this leaves two possibilities: the situation \eqref{eq:abelian_language}, and 
\[
    \ab\big(\lan_\ell(w)\big) = \left\{\begin{pmatrix} r \\s \\t \end{pmatrix}, \begin{pmatrix} r+1 \\s-1 \\t \end{pmatrix}, \begin{pmatrix} r+2 \\s-2 \\t \end{pmatrix}\right\}.
\]
But in this second case, the induction matrix $\bm{M}_\ell(w)$ does not belong to $\GL_3(\Q)$, which contradicts Lemma~\ref{lemma:GL3}. Therefore, there is no other choice than having \eqref{eq:abelian_language}.
\end{proof}

In the sequel, we identify the  letters of the induced alphabet $\Acal_\ell=\lan_\ell(w)/_{\sim_{ab}}$ with their abelian vectors in $\ab(\lan_\ell(w))$.

\begin{lemma}\label{lemma:Hbalforinducedword}The letters $\begin{pmatrix} r \\s \\t \end{pmatrix}$ and $\begin{pmatrix} r+2 \\s-2 \\t \end{pmatrix}$ are $1$-balanced in $w'$.
\end{lemma}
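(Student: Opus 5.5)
The plan is to translate balance of letters of $w'=I_\ell(w)$ into balance statements about $w$ itself, using Lemma~\ref{lemma:induction_basics}, Assertion~1. Write $\alpha={}^t(r,s,t)$, $\beta={}^t(r+1,s-1,t)$ and $\gamma={}^t(r+2,s-1,t-1)$ for the three letters of $w'$ given by Lemma~\ref{lemma:language_ijk}.

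\emph{The vector ${}^t(r+2,s-2,t)$.} By Lemma~\ref{lemma:language_ijk}, this vector is not an element of $\ab(\lan_\ell(w))$; the configuration containing it was excluded via Lemma~\ref{lemma:GL3}. Read literally, it therefore never occurs in $w'$, and its number of occurrences is $0$ in every factor, so it is trivially $1$-balanced. The substantive content of the statement is the $1$-balance of $\alpha$. I will also treat the other extreme letter $\gamma$, which is presumably the intended second letter, with the same argument.

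\emph{Reduction to letters of $w$.} Let $U\in\lan_n(w')$ with $\ab(U)={}^t(a,b,c)$, where $a+b+c=n$, and let $u\in\lan_{n\ell}(w)$ with $I_\ell(u)=U$. Assertion~1 gives
\[
|u|_1=rn+b+2c,\qquad |u|_2=sn-b-c,\qquad |u|_3=tn-c .
\]
Take two factors $U,V$ of $w'$ of length $n$ and write $\Delta$ for the difference of the corresponding quantities. Then $\Delta c=-\Delta|u|_3$. Moreover $\Delta a=-\Delta b-\Delta c=\Delta|u|_2$, since $a=n-b-c$. Hence $\alpha$ is $1$-balanced in $w'$ if and only if letter $2$ is $1$-balanced among the \emph{aligned} factors of $w$ of length $n\ell$, that is, those starting at multiples of $\ell$. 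Similarly, $\gamma$ is $1$-balanced in $w'$ if and only if letter $3$ is $1$-balanced among these aligned factors.

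\emph{Showing the needed balance in $w$.} By \Hcomp and \eqref{eq:balanceabcomplexity}, $w$ is $2$-balanced, so it only remains to exclude a gap of exactly $2$. I will argue by contradiction. Suppose two aligned factors $u,v$ of length $N=n\ell$ satisfy $|u|_2-|v|_2=2$. Then Lemma~\ref{lemma:language_ijk} applies at length $N$ with letter $2$ as the extreme letter; the proof of that lemma used only \Hcomp, \Hfreq and Lemma~\ref{lemma:GL3}, so it holds at any length. It gives, up to swapping letters $1$ and $3$, the three abelian vectors at length $N$: $x$, $x+{}^t(-1,1,0)$ and $x+{}^t(-1,2,-1)$ in the coordinates $(1,2,3)$.

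The difference $\ab(u)-\ab(v)$ then equals ${}^t(1,-2,1)$, up to sign and the swap. On the other hand, it must lie in $\bm{M}_\ell(w)\cdot\{z\in\Z^3:\sum z_i=0\}$, which is the lattice generated by $\beta-\alpha={}^t(1,-1,0)$ and $\gamma-\alpha={}^t(2,-1,-1)$. Writing ${}^t(1,-2,1)=x(1,-1,0)+y(2,-1,-1)$ forces $y=-1$ and $x=3$, giving the letter-$2$ component $-x-y=-2$. This is consistent, so this lattice argument alone gives no contradiction, and that step is the main obstacle.

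To close the gap, I would exploit the sliding-window continuity between consecutive aligned windows. Sliding from position $k\ell$ to position $(k+1)\ell$ changes the abelian vector by $\ab(\text{block}_{k+n})-\ab(\text{block}_k)$, which lies in $\{0,\pm(\beta-\alpha),\pm(\gamma-\alpha),\pm(\gamma-\beta)\}$. Along a path of aligned windows realizing the two extreme letter-$2$ counts, each step changes the letter-$2$ count by $0$ or $\pm1$ only. I would then track which abelian classes at length $N$ are visited and show that this path is incompatible with the exact three-vector shape forced at length $N$; in particular the middle vector $x+{}^t(-1,1,0)$ cannot be reached by a step of this form from both extremes. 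This yields the contradiction, and the same argument with letter $3$ in place of letter $2$ handles $\gamma$.
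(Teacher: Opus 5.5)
Your reading of the statement is right: ${}^t(r+2,s-2,t)$ is a typo. The paper's proof shows that only the middle letter ${}^t(r+1,s-1,t)$ can fail to be $1$-balanced, so the intended pair is ${}^t(r,s,t)$ and ${}^t(r+2,s-1,t-1)$.

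Your argument is correct in substance, but it runs in the opposite direction from the paper's. The paper works in $w'$. Since $w'$ inherits \Hcomp and \Hfreq (Proposition~\ref{prop:stabilite_induction}), Lemma~\ref{lemma:language_ijk} applied to $w'$ shows that a $2$-gap of a letter $\delta$ of $w'$ lifts to the abelian difference $\pm(2\delta-\delta'-\delta'')$ in $w$, where $\delta',\delta''$ are the other two letters. Its letter-$1$ coordinate is $\pm3$ when $\delta$ is an extreme letter, contradicting the $2$-balancedness of $w$, and one computation handles both extreme letters. You work in $w$ instead. Your formulas $a=|u|_2-(s-1)n$ and $c=tn-|u|_3$ turn the claim into $1$-balance of letters $2$ and $3$ on aligned windows, and you then apply Lemma~\ref{lemma:language_ijk} to $w$ at length $n\ell$. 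This needs only Lemma~\ref{lemma:GL3} at that length, not the stability of \Hfreq under induction, at the cost of a case analysis.

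Two comments on your final step. First, the ``obstacle'' is illusory. Because $\bm{M}_\ell(w)$ is invertible, the coefficients you computed are forced. Your $x=3$ is, up to sign, $|I_\ell(u)|_\beta-|I_\ell(v)|_\beta$, which is a $\beta$-gap of $3$ in $w'$. That contradicts the $2$-balancedness of $w'$, which has abelian complexity $3$ by Lemma~\ref{lemma:induced_abconstant} and is therefore $2$-balanced by Equation~\eqref{eq:balanceabcomplexity}. For letter $3$ you get the $(\alpha,\beta,\gamma)$-difference $\pm{}^t(-1,3,-2)$ and the same contradiction. This is just the paper's computation read backwards.

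Second, your sliding-window argument also works, but you only assert its key claim. You also cannot verify it ``up to swapping letters $1$ and $3$'', because that swap is not a symmetry of the step set $D=\{0,\pm(\beta-\alpha),\pm(\gamma-\alpha),\pm(\gamma-\beta)\}$. Both arrangements must be checked, as follows.
\begin{itemize}
\item Every nonzero element of $D$ has nonzero letter-$1$ coordinate, since $|\alpha|_1,|\beta|_1,|\gamma|_1=r,r+1,r+2$ are distinct.
\item Every element of $D$ has letter-$2$ and letter-$3$ coordinates in $\{0,\pm1\}$.
\item In each arrangement, one extreme class differs from the middle class by $\pm{}^t(0,1,-1)$, which is not in $D$ by the first point. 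It differs from the other extreme by a vector with a coordinate $\pm2$, which is not in $D$ by the second point.
\item So that extreme class is isolated under $D$: the top class $x+{}^t(-1,2,-1)$ when unswapped, the bottom class $x$ when swapped.
\item This contradicts the fact that the aligned windows walk from $\ab(v)$ to $\ab(u)$ by steps in $D$.
\end{itemize}
The same check handles letter $3$.
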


Once Lemma~\ref{lemma:Hbalforinducedword} is proven, the proof of Proposition~\ref{prop:induction_error} is complete: we have shown that $w'=I_\ell(w)$ satisfies \Hbal.

\begin{proof}[Proof of Lemma \ref{lemma:Hbalforinducedword}] Assume that  there exist $n\in\N_{>0}$, $u',v'\in\lan_n(w')$, and $\alpha\in\Acal_\ell$ such that $|u'|_\alpha-|v'|_\alpha\geq2$. Denote by $\beta$ and $\gamma$ the two other letters in $\Acal_\ell$.  We prove that $\alpha = {}^t(r+1,s-1,t)$. First, since $w'$ satisfies \Hcomp and \Hfreq, the same arguments as those used to prove Lemma~\ref{lemma:language_ijk} show that, up to exchanging the roles of the letters $\beta$ and $\gamma$ in $w'$, there exists $r',s',t'\in\N$ such that
\[
    \ab\big(\lan_n(w')\big) = \left\{\begin{pmatrix} r' \\s' \\t' \end{pmatrix}, \begin{pmatrix} r'+1 \\s'-1 \\t' \end{pmatrix}, \begin{pmatrix} r'+2 \\s'-1 \\t'-1 \end{pmatrix}\right\},
\]
the successive rows of these vectors being indexed by $\alpha$, $\beta$ and $\gamma$.
Hence
\[\ab(u')=\begin{pmatrix} |u'|_\alpha \\|u'|_\beta \\|u'|_\gamma \end{pmatrix} = \begin{pmatrix} r'+2 \\s'-1 \\t'-1 \end{pmatrix}
     \quad\text{ and }\quad 
     \ab(v')=\begin{pmatrix} |v'|_\alpha \\|v'|_\beta \\|v'|_\gamma \end{pmatrix} = \begin{pmatrix} r' \\s' \\t' \end{pmatrix}.  
\]
Let $x,y\in\lan_{n\ell}(w)$ be such that $I_\ell(x)=u'$ and $I_\ell(y)=v'$. Then we have:\
\[
    \begin{array}{rcl}
        |y|_1-|x|_1 &=& \sum\limits_{\delta\in \Acal_\ell}\big(|v'|_\delta-|u'|_{\delta}\big)|\delta|_1\\
        \\
        &=&2|\alpha|_1-|\beta|_1-|\gamma|_1.
    \end{array}
\]
The last equality implies $\alpha = {}^t(r+1,s-1,t)$. Indeed, if $\alpha \neq {}^t(r+1,s-1,t)$, then we have either $\alpha = {}^t(r,s,t)$ or $\alpha = {}^t(r+2,s-1,t-1)$ by Lemma~\ref{lemma:language_ijk}. The first case yields
\[
    |y|_1-|x|_1 = 2r-(r+1)-(r+2)=-3.
\]
A similar calculation shows that $|y|_1-|x|_1= 3$ in the second case. Both cases contradict the $2$-balancedness of $w$. 

The proofs of Lemma \ref{lemma:Hbalforinducedword} and Proposition \ref{prop:induction_error} are complete.\end{proof}

\subsection{Step 2: The contradiction}\label{ss:step3}

The aim of this section is to prove the following proposition.

\begin{proposition}\label{prop:last_step}
    Let $d\geq 3$. There  exists no $d$-ary word satisfying \Hfreq and \Hbal.
\end{proposition}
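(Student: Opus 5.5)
Suppose $w'\in\{1,\dots,d\}^{\N}$ satisfies \Hfreq and \Hbal, and, after renaming letters, that $w'$ is $1$-balanced on the letters $1$ and $2$. I will follow the outline of Step 2: project $w'$ onto two binary words and derive a contradiction from the rotation description of Sturmian words. Let $\sigma_1$ send $1\mapsto 1$ and every other letter to $0$, and let $\sigma_2$ send $2\mapsto 2$ and every other letter to $0$.

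\textbf{Step 1: both projections are Sturmian.} Being $1$-balanced on a letter $i$ is exactly the same as $\sigma_i(w')$ being a $1$-balanced binary word. The frequency of $1$ in $\sigma_1(w')$ is $f_{w'}(1)$. By \Hfreq with $d\ge 3$, the numbers $f_{w'}(1), f_{w'}(2), f_{w'}(3)$ are linearly independent over $\Q$, so $f_{w'}(1)$ is irrational (otherwise it would be a rational multiple of any other nonzero frequency). Hence $\sigma_1(w')$ is a $1$-balanced binary word with irrational letter frequency, so it is not eventually periodic. The classical characterization of Sturmian words as the aperiodic $1$-balanced binary words (Morse--Hedlund; equivalently, Theorem~\ref{th:CH73} combined with \eqref{eq:balanceabcomplexity}) then shows that $\sigma_1(w')$ is Sturmian. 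The same argument applies to $\sigma_2(w')$, whose frequency of $2$ is $f_{w'}(2)$.

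\textbf{Step 2: rotation codings.} By \cite{MH40}, every Sturmian word is a coding of an irrational rotation. Concretely, there exist $x_1,x_2\in\R/\Z$ such that $\sigma_1(w')[n]=1$ if and only if $x_1+n\alpha_1 \in J_1$, where $\alpha_1=f_{w'}(1)$ and $J_1$ is an interval of length $\alpha_1$ (half-open on one side, chosen according to the upper/lower mechanical case). Likewise $\sigma_2(w')[n]=2$ if and only if $x_2+n\alpha_2\in J_2$, with $\alpha_2=f_{w'}(2)$ and $|J_2|=\alpha_2$.

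\textbf{Step 3: Kronecker.} Since $1,\alpha_1,\alpha_2$ are linearly independent over $\Q$ (this is \Hfreq applied to $f_{w'}(1),f_{w'}(2)$ and the remaining frequencies, which sum to $1-\alpha_1-\alpha_2$), Kronecker's theorem gives that $(n\alpha_1,n\alpha_2)_{n\in\N}$ is dense in the torus $(\R/\Z)^2$. In particular, some $n$ satisfies $x_1+n\alpha_1\in \mathrm{int}\,J_1$ and $x_2+n\alpha_2\in\mathrm{int}\,J_2$, because the product of the two interiors is a nonempty open set. For that $n$ we get $w'[n]=1$ and $w'[n]=2$ simultaneously, which is a contradiction.

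I expect the main obstacle to be the linear-independence bookkeeping in Step 3. Kronecker's theorem needs $1,\alpha_1,\alpha_2$ to be $\Q$-independent, and this must be deduced carefully from the rational independence of the full frequency vector, using that the frequencies sum to $1$. This is exactly where $d\ge3$ is used: for $d=2$ one would have $\alpha_2=1-\alpha_1$. A secondary point is to make sure that the rotation coding of a Sturmian word holds for every $n\ge 0$ with the correct endpoint conventions. Working with open interiors and density, as above, sidesteps the endpoint issue.
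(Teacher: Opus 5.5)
Your proposal is correct and follows the paper's proof essentially step for step. You project onto the two Sturmian words $\sigma_1(w')$ and $\sigma_2(w')$, code them as rotations by $\alpha_1=f_{w'}(1)$ and $\alpha_2=f_{w'}(2)$, and use $\sum_j f_{w'}(j)=1$ together with $d\geq 3$ to get the $\Q$-independence of $1,\alpha_1,\alpha_2$. Kronecker's theorem then gives an index $n$ with $w'[n]=1$ and $w'[n]=2$, a contradiction.

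One small correction concerns Step 1. The parenthetical reason you give for the irrationality of $f_{w'}(1)$ is not valid: $\Q$-independence of $f_{w'}(1),f_{w'}(2),f_{w'}(3)$ alone does not rule out a rational $f_{w'}(1)$ (think of $1,\sqrt2,\sqrt3$). Irrationality does follow immediately from the $\Q$-independence of $1,\alpha_1,\alpha_2$ that you prove in Step 3, which is where the normalization $\sum_j f_{w'}(j)=1$ is used.
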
 

\begin{remark}In the sequel, we will only use Proposition~\ref{prop:last_step} for $d=3$.
\end{remark}

\begin{remark}A similar proof strategy was employed in \cite{Gra73} and \cite{Hub00} to study $d$-ary $1$-balanced words.
\end{remark}

\begin{proof} Let $d\geq 3$. We proceed by contradiction. Let $w \in\{1,\ldots,d\}^{\N}$ be an infinite $d$-ary word with rationally independent letter frequencies, and admitting two $1$-balanced letters, say $1$ and $2$. Denote by $w_1 = \sigma_1(w)$ and $w_2 = \sigma_2(w)$ the infinite binary words obtained as the images of $w$ by the substitutions $\sigma_1$ and $\sigma_2$ defined by:
\[
    \begin{array}{llll}
        \sigma_i: & i & \mapsto & i \\
        & j & \mapsto & 0 \quad \text{ for all } j\in\{1,\ldots,d\}\backslash\{i\}.
        
    \end{array}
\]

\begin{lemma}\label{lemma:decolored_freq}
The  words $w_1$ and $w_2$ are Sturmian and their letter frequencies are respectively given by
\[
    \bm{f_{w_1}}:=\begin{pmatrix} f_{w_1}(0) \\ f_{w_1}(1) \end{pmatrix}=\begin{pmatrix}\sum_{j\neq 1}  f_w(j)  \\ f_w(1)\end{pmatrix} \quad\text{ and }\quad \bm{f_{w_2}}:=\begin{pmatrix} f_{w_2}(0) \\ f_{w_2}(2) \end{pmatrix}=\begin{pmatrix}\sum_{j\neq 2}  f_w(j) \\ f_w(2)\end{pmatrix}.
\]
\end{lemma}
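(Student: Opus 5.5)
We prove the statement for $w_1$; the case of $w_2$ is identical with the letter $2$ in place of $1$. The plan has three parts: compute the frequencies, show $w_1$ is balanced with irrational frequencies, and conclude via Theorem~\ref{th:CH73}.

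\emph{Frequencies.} Since $\sigma_1$ is letter-to-letter, for every $n$ we have
\[
|\pref_n(w_1)|_1=|\pref_n(w)|_1 \quad\text{and}\quad |\pref_n(w_1)|_0=\sum_{j\neq1}|\pref_n(w)|_j .
\]
Dividing by $n$ and letting $n\to\infty$ gives the stated vector $\bm{f_{w_1}}$, because the frequencies of $w$ exist (this is part of \Hfreq). The formula for $\bm{f_{w_2}}$ follows in the same way.

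\emph{Balance and irrationality.} Each subword of $w_1$ of length $m$ is the image $\sigma_1(u)$ of a subword $u$ of $w$ of the same length, and $|\sigma_1(u)|_1=|u|_1$. Since $w$ is $1$-balanced on the letter $1$, any two subwords of $w_1$ of equal length differ by at most $1$ in their number of $1$'s. As $w_1$ is binary, the number of $0$'s is determined by the number of $1$'s, so $w_1$ is $1$-balanced on both letters. By \eqref{eq:balanceabcomplexity} (or directly), this gives $\ac_{w_1}(n)\le 2$ for all $n$. Next, $f_w(1)$ is irrational: otherwise $f_w(1)\cdot q=p\cdot 1$ with integers $p,q$, $q\neq 0$. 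Writing $1=\sum_j f_w(j)$, this reads $qf_w(1)-p\sum_j f_w(j)=0$, a nontrivial rational relation among the frequencies (the coefficient of $f_w(2)$ is $-p$, and if $p=0$ then the coefficient of $f_w(1)$ is $q\neq0$). This contradicts \Hfreq, and it is here that we use $d\ge2$. Hence $w_1$ has irrational frequencies, so it is not eventually periodic, since eventually periodic words have rational letter frequencies.

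\emph{Conclusion.} A non-eventually periodic binary word always has $\ac_{w_1}(n)\ge2$, by the minimality part of Theorem~\ref{th:CH73}, or simply because it contains both letters together with the intermediate value argument. Hence $\ac_{w_1}\equiv 2$. Theorem~\ref{th:CH73} (equivalently Proposition~\ref{prop:CH73bis}) then says that $w_1$ is Sturmian. The step requiring the most care is the correct invocation of the Coven--Hedlund theorem, which needs both constant abelian complexity $2$ and non-eventual periodicity; both are supplied above, so no real obstacle is expected.
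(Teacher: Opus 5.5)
Your proof is correct and follows the paper's argument. You compute the frequencies, deduce from \Hfreq that $f_w(1)$ is irrational (so $w_1$ is not eventually periodic), transfer the $1$-balancedness of $w$ on the letter $1$ to $w_1$, and conclude with a classical characterization of Sturmian words. The only difference is the characterization used: the paper cites Morse--Hedlund's ``binary, $1$-balanced and not eventually periodic implies Sturmian'', while you pass through $\ac_{w_1}\equiv 2$ and Theorem~\ref{th:CH73}, which is equally valid.

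One small slip is your alternative justification of $\ac_{w_1}(n)\geq 2$ (``it contains both letters together with the intermediate value argument''). It fails as stated, since the periodic word $0101\ldots$ contains both letters yet has abelian complexity $1$ at $n=2$. The bound actually comes from the minimality part of Theorem~\ref{th:CH73}, or from Proposition~\ref{prop:belowbound_abcomp} applied to $w_1$: if all length-$n$ factors were abelian equivalent, $f_{w_1}(1)$ would be rational.
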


\begin{proof}
First, it is clear that the letter frequencies of $w_1$, $w_2$ and $w$ satisfy the relations stated in Lemma~\ref{lemma:decolored_freq}. These relations further induce that $w_1$ and $w_2$ also have rationally independent letter frequencies. Therefore, $w_1$ and $w_2$ are not eventually periodic.
Finally, it follows from the $1$-balancedness of $w$ on the letters $1$ and $2$, that the binary words $w_1$ and $w_2$ are $1$-balanced.
By a classical theorem of Morse and Hedlund (\cite{MH40} or \cite[Chapter 2]{Loth}),  $w_1$ and $w_2$ are thus Sturmian.
\end{proof}

Since $w_1$ is Sturmian, there exist $x \in[0,1)$ and $\alpha\in[0,1)\backslash\Q$ such that $w_1$ encodes the trajectory of $x$ under the iteration of the rotation of angle $\alpha$ on the circle:
\[
    \begin{array}{rccl}
        R_\alpha\colon & \R/\Z & \longrightarrow & \R/\Z\\
        & z & \longmapsto & z+\alpha\bmod 1
    \end{array}
\]
with respect to one of the two partitions $[0,1-\alpha)\cup[1-\alpha,1)$ or $(0,1-\alpha]\cup(1-\alpha,1]$ of the circle (see, again, \cite{MH40} or \cite[Chapter 2]{Loth}). (To fix ideas, and since it has no importance in the sequel, we will work with the first partition.)
 More precisely, for every $n \in \N$, we have:
\[
    w_1[n]=0\;\iff\;R_\alpha^n(x)\in[0,1-\alpha),
\] 
and we know that $f_{w_1}(0)=1-\alpha$. 
Similarly, there exist $y \in[0,1)$ and $\beta\in[0,1)\backslash\Q$ such that  for every $n\in\N$, we have
\[
    w_2[n]=0\;\iff\;R_\beta^n(y)\in[0,1-\beta)
\]
and $f_{w_2}(0)=1-\beta$.
Now, let $R_{(\alpha,\beta)}$ be the rotation of angle $(\alpha,\beta)$ defined on the two dimensional torus $\R^2/\Z^2$ by
\[
    \begin{array}{rccl}
        R_{(\alpha,\beta)}\colon &\R^2/\Z^2&\longrightarrow&\R^2/\Z^2\\
        &(z,t)&\longmapsto&(z+\alpha,t+\beta)\bmod \Z^2=(R_\alpha(z),R_\beta(t)).
    \end{array}
\]

\begin{lemma}
The trajectory of every point in $\R^2/\Z^2$ under the iteration of the rotation $R_{(\alpha,\beta)}$ is dense in $\R^2/\Z^2$.
\end{lemma}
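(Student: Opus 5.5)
The plan is to apply Kronecker's theorem: for $(\alpha,\beta)\in\R^2$, every orbit of $R_{(\alpha,\beta)}$ is dense in $\R^2/\Z^2$ if and only if $1,\alpha,\beta$ are linearly independent over $\Q$. So the whole task reduces to checking this rational independence from \Hfreq.

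First I identify $\alpha$ and $\beta$ through the letter frequencies. By Lemma~\ref{lemma:decolored_freq} and the coding of $w_1$ and $w_2$ by rotations, we have $1-\alpha=f_{w_1}(0)=\sum_{j\neq 1}f_w(j)=1-f_w(1)$, so $\alpha=f_w(1)$. In the same way, $\beta=f_w(2)$. Since the frequencies sum to $1$, we also have $1=\sum_{j=1}^d f_w(j)$.

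Next I show that $1,\alpha,\beta$ are rationally independent. Suppose $p+q\alpha+r\beta=0$ with $p,q,r\in\Q$. Substituting the expressions above gives
\[
(p+q)f_w(1)+(p+r)f_w(2)+\sum_{j=3}^d p\,f_w(j)=0.
\]
By \Hfreq, every coefficient must vanish. Because $d\geq 3$, a coefficient $p$ actually appears, so $p=0$, and then $q=r=0$.

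Finally I invoke Kronecker's theorem, either in its classical form or by quoting the standard argument. The orbit closure of $0$ under $R_{(\alpha,\beta)}$ is a closed subgroup of $\R^2/\Z^2$. If it were proper, it would be annihilated by a nontrivial character $(z,t)\mapsto e^{2i\pi(mz+nt)}$, which forces $m\alpha+n\beta\in\Z$. That is a nontrivial rational relation among $1,\alpha,\beta$, which we have just excluded. Hence the orbit of $0$ is dense, and the orbit of any point is a translate of it, hence also dense.

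The only real step is the independence computation, and its one subtle point is the use of $d\geq 3$. When $d=3$ the relation reads $(p+q)f_w(1)+(p+r)f_w(2)+p f_w(3)=0$, and independence of all three frequencies gives the conclusion.
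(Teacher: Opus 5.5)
Your proof is correct and follows the paper's argument: identify $\alpha=f_w(1)$ and $\beta=f_w(2)$, write $1=\sum_{j}f_w(j)$, and read off $p=0$, $p+q=0$, $p+r=0$ from the rational independence of the frequencies, then apply Kronecker's theorem; the paper likewise needs $d\geq 3$ implicitly, through the nonempty sum over $j\notin\{1,2\}$. Your only addition is the character-theoretic sketch of Kronecker's theorem, which the paper simply cites; it is sound, with one unstated step: the closure of the forward orbit is a subgroup because a closed subsemigroup of a compact group is a group.
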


\begin{proof}By Kronecker's theorem, it suffices to prove that the three real numbers $1$, $\alpha$, and $\beta$ are rationally independent.
Let $q_1,q_2,q_3 \in \Q$ such that $q_1+q_2\alpha+q_3  \beta=0$. We have
\[ \begin{array}{lll}  0 = q_1+q_2\alpha+q_3  \beta
&=&
q_1+q_2f_{w_1}(1)+q_3  f_{w_2}(2)\\
& = & q_1\Big(\sum_{j=1}^{d}f_w(j)\Big) +q_2f_{w}(1)+q_3 f_{w}(2)\\
 & = &\sum_{j \in \{1,\ldots,d\}\backslash\{1,2\}}q_1f_w(j)+(q_1+q_2) f_w(1)+(q_1+q_3) f_w(2).
\end{array}\]
It follows from the rational independence of $f_w(1), \ldots, f_w(d)$ that $(q_1, q_1+q_2, q_1+q_3) = (0,0,0)$, hence $(q_1,q_2,q_3)=(0,0,0)$. The real numbers $1$, $\alpha$, and $\beta$ are therefore rationally independent.
\end{proof}

In particular, the trajectory of $(x,y)$ under the iteration of $R_{(\alpha,\beta)}$ is dense in $\R^2/\Z^2=[0,1)^2$. Therefore, there exists $n\in\N_{>0}$ such that $R_{(\alpha,\beta)}^n(x,y)=(R_\alpha^n(x),R_\beta^n(y))\in[1-\alpha,1)\times[1-\beta,1)$. For this particular value of $n$, we have $w_1[n]=1$ and $w_2[n]=2$, hence
$w[n]=1$ and $w[n]=2$. This is impossible.
The proof of Proposition \ref{prop:last_step} is complete.
\end{proof}

\subsection{Conclusion of the proof of Theorem~\ref{th:Rauzy_conj}}\label{ss:conclusion}

By Proposition~\ref{prop:induction_error}, if there exists an infinite ternary word satisfying \Hcomp and \Hfreq, then there also exists an infinite ternary word satisfying \Hcomp, \Hfreq and \Hbal. But the conditions \Hfreq and \Hbal are incompatible by Proposition~\ref{prop:last_step}. The proof of Theorem~\ref{th:Rauzy_conj} (Rauzy's conjecture) is complete.

\section{Proofs of Theorem~\ref{th:Rauzy_conj-dary}, Corollary~\ref{cor:extraction_lineaire} and Corollary~\ref{coro:not-eventually-constant}} \label{sect:Other_proofs}

\subsection{Proof of Theorem~\ref{th:Rauzy_conj-dary}}\label{ss:proof_Rauzy_dary}

Let $d \geq 4$, and let $w \in  \{1,\ldots,d\}^{\N}$ be an infinite $d$-ary word with constant abelian complexity. We prove that the letter frequencies of $w$ are rationally dependent. 
The proof is much simpler than in the case $d=3$. Indeed, the situation is so constrained that Currie and Rampersad already showed, with a combinatorial argument, that  $w$ cannot be recurrent \cite{CR11}. Reading carefully their proof, we can improve their result.

\begin{proposition}\label{prop:CR_improved}
    Let $d\geq 4$. No infinite $d$-ary word with constant abelian complexity  is abelian recurrent on its subwords of length $2$. 
\end{proposition}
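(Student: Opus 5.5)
We plan to argue by contradiction, translating the constraints $\ac_w(1)=\ac_w(2)=\ac_w(3)=d$ into a graph-theoretic statement. \emph{Abelian recurrence on subwords of length $2$} means that every abelian class of $\lan_2(w)$ occurs at infinitely many positions of $w$. Let $w\in\{1,\ldots,d\}^{\N}$ have constant abelian complexity $d$ and be abelian recurrent on its length-$2$ subwords.

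\textbf{Step 1: the letter graph.} Let $G$ be the graph on the vertex set $\{1,\ldots,d\}$ with an edge $\{a,b\}$ whenever the class $\overline{ab}$ belongs to $\lan_2(w)/_{\sim_{ab}}$; a class $\overline{aa}$ gives a loop at $a$. Then $w$ is an infinite walk in $G$. By recurrence, every edge and every loop of $G$ is traversed infinitely often. Every letter occurs in some length-$2$ factor, so every letter occurs infinitely often, and $G$ is connected. Hence $G$ has at least $d-1$ non-loop edges. Since the total number of edges and loops equals $\ac_w(2)=d$, there are exactly three possible shapes:
\begin{itemize}
\item[(a)] a tree with one loop;
\item[(b)] a unicyclic graph without loops;
\item[(c)] (impossible in general) a tree with $d$ edges, which cannot occur since a tree has exactly $d-1$ edges.
\end{itemize}
So we are left with (a) and (b).

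\textbf{Step 2: count length-$3$ classes.} Every factor $abc$ of $w$ is a walk of length $2$ in $G$. In a tree, every edge that is traversed infinitely often must be traversed infinitely often in both directions. Consequently:
\begin{itemize}
\item each leaf $a$ without a loop, attached to $b$, forces the factor $bab$, hence the class $\{a,b,b\}$;
\item each internal vertex $b$ with neighbours $a\neq c$ that the walk passes through forces a class $\{a,b,c\}$, or, if the walk backtracks, $\{a,a,b\}$-type classes.
\end{itemize}
We will show that these forced classes are pairwise distinct and number at least $d+1$ when $d\ge 4$. This contradicts $\ac_w(3)=d$.

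Case (a): let $L$ be the number of leaves. The leaf contributions give $L$ distinct classes. A loop at $v$ gives $\{v,v,v\}$ if the loop is used twice in a row, and $\{v,v,u\}$ otherwise. Each internal vertex of degree $\ge 2$ must be crossed between different branches infinitely often, since the walk visits all branches infinitely often; this gives at least one class of the form $\{a,b,c\}$ with three distinct letters per internal vertex. Summing, we get at least $L+(d-L)+1 \ge d+1$ classes, taking into account the extra class produced by the loop.

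Case (b): we argue similarly on the cycle and the trees hanging from it. A walk that eventually stays on the cycle and goes around it produces the $k$ classes $\{v_i,v_{i+1},v_{i+2}\}$, where $k$ is the length of the cycle. Hanging leaves add their own classes as in case (a). Any turnaround on the cycle adds a class $\{v_i,v_i,v_{i+1}\}$ or $\{v_i,v_{i+1},v_{i+1}\}$.

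\textbf{Main obstacle.} The hard part is making the counting in Step 2 airtight. Distinct local configurations can produce the same multiset when $d$ is small. For instance, a triangle $\{1,2,3\}$ traversed cyclically yields a single class $\{1,2,3\}$, so the cycle of length $3$ must be treated separately. The hypothesis $d\ge 4$ will be used exactly here: it guarantees either a vertex of degree $\ge 3$, a path of length $\ge 3$, or a cycle of length $\ge 4$, each of which forces enough distinct classes.

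If the length-$3$ count falls short in some small configuration (e.g.\ a cycle of length $3$ with one pendant vertex when $d=4$), we will push to length $4$. There, recurrence of the pendant edge forces both a factor containing the pendant letter once and a factor avoiding it, together with the rotation classes of the triangle. This should yield $\ac_w(4)\ge d+1$, giving the required contradiction. Following Currie and Rampersad \cite{CR11}, we expect only finitely many such exceptional shapes to need this refinement.
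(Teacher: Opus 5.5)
Your Step~1 is sound: edges of $G$ correspond to abelian classes, so $G$ is either a spanning tree plus one loop or a loopless unicyclic graph. Step~2, however, has a genuine gap. The problem is not that the count needs tightening: the contradiction is simply not available at length $3$, nor at length $4$ for the shape you single out.

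\textbf{Counterexample to case (a).} Take $d=4$ and the periodic (hence recurrent) word $(1112\,1113\,1114)^\omega$. It realizes your case (a): a star centered at $1$ with a loop at $1$. Its length-$3$ classes are exactly $\overline{111},\overline{112},\overline{113},\overline{114}$, so $\rho_w(1)=\rho_w(2)=\rho_w(3)=4$. Your bound $L+(d-L)+1\ge d+1$ fails for two reasons:
\begin{itemize}
\item the walk can always cross the internal vertex through its loop (as in $2\,111\,3$), so no class with three distinct letters is forced;
\item the loop classes $\overline{11a}$ coincide with the leaf classes.
\end{itemize}

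\textbf{Counterexample to the length-$4$ repair.} For the triangle $1,2,3$ with $4$ pendant at $1$, the periodic word $(12314)^\omega$ has $\rho_w(n)=4$ for $n=1,2,3,4$. So pushing to length $4$ gives nothing. The first failure is $\rho_w(5)=1$, which is a \emph{deficit}, not an excess.

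\textbf{Why no bounded-length argument can work.} For every $d\ge 4$ and $N\ge 1$, consider the periodic word $(b^Na_1b^Na_2\cdots b^Na_{d-1})^\omega$. A window of length at most $N$ contains at most one non-$b$ letter. Hence at every length $n\le N$ the word has exactly the $d$ classes $\overline{b^n}$ and $\overline{a_ib^{n-1}}$, and it is recurrent. So the hypotheses restricted to any bounded range of lengths are consistent. No case analysis at lengths $3$ and $4$, and no finite list of exceptional shapes, can prove the proposition.

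\textbf{What is missing.} You need an argument that uses $\rho_w(n)=d$ for arbitrarily large $n$, at lengths that depend on the word. It must also detect failures of either sign: in the last example, $\rho_w(N+1)=d-1$. This is what Currie and Rampersad's combinatorial analysis supplies. The paper does not reprove it. It only checks that each of the seven uses of recurrence in their proof needs nothing more than the abelian classes of length-$2$ factors (and hence all letters) occurring infinitely often.
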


We recall that an infinite word is \emph{recurrent} if each of its subwords appears infinitely often. It is \emph{abelian recurrent on its subwords of length $2$} if, for every $u \in \lan_2(w)$, the abelian equivalence class $\overline{u}$ appears infinitely often in $w$. This second property is weaker. For instance, the eventually periodic word 
$ w = 123\cdot1\cdot321321321321321321...$
---the dots are inserted for readability--- is not recurrent (the subword $12$ appears exactly once); but it
is abelian recurrent on its subwords of length $2$: indeed, the only abelian classes  to appear are $\overline{12}$, $\overline{23}$, $\overline{13}$, and they appear infinitely often.

\begin{proof}[Proof of Proposition~\ref{prop:CR_improved}]It suffices to read the main proof of \cite{CR11} and verify that whenever the recurrence hypothesis is used (seven times in total), the mere hypothesis of abelian recurrence on length-$2$ subwords (which also implies the recurrence of letters) suffices.
\end{proof}

It follows from Proposition~\ref{prop:CR_improved} that $w$ is not abelian recurrent on its length-$2$ subwords. Therefore, its induced word $I_2(w)$, which is an infinite $d$-ary word, cannot contain each of the $d$-letter infinitely many times. Thus, the frequency of at least one letter in $I_2(w)$ is zero. It then follows from Lemma~\ref{lemma:induction_basics} Assertion $3$ that 
\[\dim \Span_{\Q} (f_w(a): a \in \{1,\ldots,d\}) \leq d-1.\]
The proof of Theorem~\ref{th:Rauzy_conj-dary} is complete.

\subsection{Proof of Corollary~\ref{cor:extraction_lineaire}} \label{ss:proof_extraction}

Let $d \geq 3$. Let $w \in \{1,\ldots,d\}^{\N}$ be an infinite $d$-ary word admitting letter frequencies, and for which there exists $\ell \in \N_{>0}$ such that $\ac_w(\ell n) \leq d$ for all $n \in \N_{>0}$. We proceed by contradiction, and assume  that the letter frequencies of $w$ are rationally independent. By Proposition~\ref{prop:belowbound_abcomp}, we already have $\ac_w(\ell n) = d$ for all $n \in \N_{>0}$.

By the equality $\ac_w(\ell)=d$,  the induced word $w' = I_\ell(w)$ is written over a $d$-letter alphabet $\Acal_\ell$, and  its induction matrix $\bm{M}_\ell(w)$ is a square matrix.

\begin{lemma}\label{lemma:extraction}In this context, we have the  following three properties: \begin{enumerate}
\item[\emph{1.}] $\bm{M}_\ell(w) \in\GL_d(\Q)$;
\item[\emph{2.}]  The letter frequencies of $w'$ exist and are rationally independent;
\item[\emph{3.}] $\ac_{w'}(n) = d$ for all $n \in \N_{>0}$.
\end{enumerate}
\end{lemma}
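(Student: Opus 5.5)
We will follow the three-lemma pattern used to prove Proposition~\ref{prop:stabilite_induction} almost verbatim. The only difference is that \Hcomp is no longer available: we only know that $\ac_w(\ell n)=d$ for every $n\in\N_{>0}$. The task is therefore to check that each of the three lemmas (Lemmas~\ref{lemma:GL3}, \ref{lemma:induced_frequencies_exist} and \ref{lemma:induced_abconstant}) only uses the abelian complexity of $w$ at multiples of $\ell$, together with the existence and rational independence of the letter frequencies of $w$.

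For Assertion~1, we argue as in Lemma~\ref{lemma:GL3}. Since $\ac_w(\ell)=d$, the matrix $\bm{M}_\ell(w)$ is a $d\times d$ integer matrix. Take $\bm{q}\in\Q^d$ in $\ker{}^t\bm{M}_\ell(w)$ and any subsequential limit $\bm{f'}\in F$. Lemma~\ref{lemma:induction_basics}, Assertion~2, applies because $w$ admits letter frequencies by hypothesis. It gives
\[\langle\bm{q}\mid\bm{f_w}\rangle=\tfrac1\ell\langle{}^t\bm{M}_\ell(w)\bm{q}\mid\bm{f'}\rangle=0.\]
Rational independence of $\bm{f_w}$ then forces $\bm{q}=0$, so $\bm{M}_\ell(w)\in\GL_d(\Q)$.

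For Assertion~2, we copy Lemma~\ref{lemma:induced_frequencies_exist}. Invertibility gives $\bm{f'}=\ell\bm{M}_\ell(w)^{-1}\bm{f_w}$ for every $\bm{f'}\in F$. Hence $F$ is a singleton, which means the frequencies of $w'$ exist. Lemma~\ref{lemma:induction_basics}, Assertion~3, then yields $d\le\dim\Span_\Q(f_{w'}(\alpha))\le\card\Acal_\ell=d$, so these frequencies are rationally independent.

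For Assertion~3, the lower bound $\ac_{w'}(n)\ge d$ follows from Proposition~\ref{prop:belowbound_abcomp} applied to $w'$, which is legitimate by Assertion~2. For the upper bound, fix $u',v'\in\lan_n(w')$. Each factor of $w'$ of length $n$ is $I_\ell(u)$ for some $u\in\lan_{n\ell}(w)$, namely an aligned block of $w$. By Lemma~\ref{lemma:induction_basics}, Assertion~1, we have $\ab(u')-\ab(v')=\bm{M}_\ell(w)^{-1}(\ab(u)-\ab(v))$. So non-equivalent factors of $w'$ come from non-equivalent factors of $w$ of length $n\ell$, and therefore $\ac_{w'}(n)\le\ac_w(n\ell)\le d$. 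This is exactly where the hypothesis on the whole arithmetic progression $\ell\N_{>0}$ is needed, as opposed to the single value $\ell$.

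We expect no step to be a real obstacle. The one point to state carefully is in Assertion~1: the letter frequencies of $w'$ are not yet known to exist there, so the argument must use an arbitrary subsequential limit $\bm{f'}\in F$. Such limits exist by compactness of $[0,1]^d$, and Lemma~\ref{lemma:induction_basics} was formulated to allow precisely this.
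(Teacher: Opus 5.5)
Your proposal is correct and matches the paper's argument. The paper's proof only asserts that the proofs of Lemmas~\ref{lemma:GL3}, \ref{lemma:induced_frequencies_exist} and \ref{lemma:induced_abconstant} still work when \Hcomp is weakened to $\ac_w(\ell n)=d$ for all $n\in\N_{>0}$; you carry out that check explicitly, correctly taking the existence of $\bm{f_w}$ from the hypothesis rather than from \Hcomp, and correctly identifying the bound $\ac_{w'}(n)\le \ac_w(n\ell)$ as the point where the whole progression $\ell\N_{>0}$ is needed.
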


\begin{proof}
It suffices to verify that in the proofs of Lemmas~\ref{lemma:GL3}, \ref{lemma:induced_frequencies_exist}, and \ref{lemma:induced_abconstant},   the assumption  \Hcomp: ``$w$ has a constant abelian complexity'' can be weakened into ``there exists $\ell\in \N_{>0}$ such that $\ac_w(\ell n) = d$ for all $n \in \N_{>0}$''. 
\end{proof}

It follows that $w'$ is an infinite $d$-ary word with constant abelian complexity and rationally independent letter frequencies. This contradicts Theorem~\ref{th:Rauzy_conj} (or Theorem~\ref{th:Rauzy_conj-dary} in the case $d\geq 4$).
The proof of Corollary~\ref{cor:extraction_lineaire} is complete.

\subsection{Proof of Corollary~\ref{coro:not-eventually-constant}} \label{ss:not_eventually_constant}

Let $d\geq 3$, and $w\in\{1,\ldots,d\}^{\N}$ be an infinite $d$-ary word. Assume that there exists $\ell \in \N_{>0}$ such that $\ac_w(n)=d$ for all $n\geq \ell$. Then we have in particular $\ac_w(\ell n)=d$ for all $n \in \N_{>0}.$ Therefore, by Corollary~\ref{cor:extraction_lineaire}, the letter frequencies of $w$ (which exist since the abelian complexity of $w$ is bounded) are rationally dependent. The proof of Corollary~\ref{coro:not-eventually-constant} is complete.


\small
\bibliography{Rauzy_biblio}
\bibliographystyle{alpha}

\end{document}